\documentclass{article}
\usepackage{verbatim}
\newcommand{\R}{\mathbb{R}}
\newcommand{\Z}{\mathbb{Z}}
\newcommand{\C}{\mathbb{C}}
\usepackage{array}
\usepackage{natbib}
\usepackage{amsfonts}
\usepackage{bbm}
\usepackage{booktabs}
\usepackage{amsmath, mathrsfs}
\usepackage{mathtools}
\usepackage{enumitem}
\usepackage{amsthm}
\usepackage{xcolor}
\usepackage{graphicx} 
\graphicspath{ {./figures/} }
\usepackage{caption}
\usepackage{subcaption}
\newtheorem{theorem}{Theorem}[section]
\newtheorem{corollary}{Corollary}[theorem]
\newtheorem{lemma}[theorem]{Lemma}

\newtheorem{remark}[theorem]{Remark}
\newtheorem{proposition}[theorem]{Proposition}

\usepackage{hyperref}

\newcolumntype{C}[1]{>{\centering\let\newline\\\arraybackslash\hspace{0pt}}p{#1}}

\renewcommand{\mod}[1]{\ensuremath{\mathop{}[#1]}}

\DeclareMathOperator{\sinc}{sinc}
\DeclareMathOperator{\Spec}{Spec}

\newcommand{\set}[1]{\ensuremath{\left\{#1\right\}}}
\newcommand{\seq}[1]{\ensuremath{\left(#1\right)}}
\newcommand{\norm}[1]{\ensuremath{\left\lVert#1\right\rVert}}
\newcommand{\abs}[1]{\ensuremath{\left\lvert#1\right\rvert}}
\newcommand{\ip}[1]{\ensuremath{\left\langle#1\right\rangle}}

\newcommand{\frameset}{\ensuremath{\mathcal{F}}}

\newcommand{\defeq}{\vcentcolon=}

\usepackage{mathtools}
\usepackage{tikz}
\usetikzlibrary{matrix}

\usepackage[margin=1in]{geometry}

\title{On the structure of the Gram matrix for Gabor systems generated by B-splines.}

\date{\today}

\author{
Martin Buck\thanks{Department of Mathematics, Tufts University, Medford MA 02131, USA. Email: martin.buck@tufts.edu}
\and
Christina Frederick\thanks{Department of Mathematical Sciences, New Jersey Institute of Technology, Newark NJ 07102, USA. Email: christina.frederick@njit.edu}
\and
Kasso A.~Okoudjou\thanks{Department of Mathematics, Tufts University, Medford MA 02131, USA. Email: kasso.okoudjou@tufts.edu}
\and
Alexander Stangl\thanks{Department of Mathematics, New Jersey Institute of Technology, Newark NJ 07102, USA. Email: ajs282@njit.edu}
}

\begin{document}

\maketitle

\abstract{We consider the Gabor system $\mathcal{G}(g,a\mathbb{Z}\times b\mathbb{Z})$ generated by a continuous, compactly supported function $g$ over the time-frequency lattice generated by the parameters $a$ and $b$. We show that, under an appropriate ordering of the Gabor elements, certain submatrices of the Gram matrix of $\mathcal{G}(g,a\mathbb{Z}\times b\mathbb{Z})$ exhibit a block-Toeplitz structure. This structural property enables us to derive spectral results for finite sub-blocks of the Gram matrix  by appealing to the spectral theory of Toeplitz matrices. In particular, we apply our results to the Gram matrix of Gabor systems generated by the $N$th-order B-spline.

}

\section{Introduction}
\subsection{Background on the frame set problem}
Gabor analysis originated from two distinct perspectives: the operator-theoretic framework of J.~von Neumann \cite{MR66944} in 1932 and the communication-theoretic approach of D.~Gabor \cite{gabo1946} in 1946. Both authors independently investigated the completeness of the system 
\begin{equation}\label{gabgauss}
	\mathcal{G}(g,\mathbb{Z}^2) = \{g_{(n,k)}(\cdot) = g(\cdot - n) e^{2\pi ik \cdot} : (n,k) \in \mathbb{Z}^2\},
\end{equation}
utilizing the Gaussian $g(x)=e^{-\pi x^2}$ for its optimal localization in the time-frequency plane. This dual legacy highlights the fundamental nature of the integer lattice $\mathbb{Z}^2$ in tiling phase space (see \cite{heil2007history} for a detailed history).

More generally, the \textit{Gabor system} generated by a non-zero window function
\(g \in L^{2}(\R)\) along a (countable) discrete set of time-frequency shifts
\(\Gamma \subset \R^2\),
\begin{equation}\label{eq:Gabor_glambda}
	\mathcal{G}(g, \Gamma) = \{g_\gamma:=g(\cdot-\gamma_1) 
		e^{2\pi i\gamma_2\cdot}:\, \gamma=(\gamma_1,\gamma_2) \in \Gamma\},
\end{equation} is said to be a frame provided that there exist constants $0<\alpha\leq \beta < \infty$  such that for each \(f \in
L^2(\R)\)
\begin{equation}
	\alpha\norm{f}^2 \leq \sum_{\gamma \in \Gamma}
		\abs{\ip{f, g_{\gamma}}}^2 \leq \beta\norm{f}^2.
\end{equation}

It follows that there exists an associated set of functions 
\(\set{h_{\gamma}}_{\gamma \in \Gamma}\) which is also a Gabor frame for \(L^2(\R)\) for which the following reconstruction formulas hold for each \(f \in L^2(\R)\):
\begin{equation}
	f = \sum_{\gamma \in \Gamma} \ip{f, h_{\gamma}} g_{\gamma}
	  = \sum_{\gamma \in \Gamma} \ip{f, g_{\gamma}} h_{\gamma}.
\end{equation}
 When $\Gamma =a \Z \times b \Z$ is a separable lattice, it can be shown that the set of functions \(\set{h_{\gamma}}_{\gamma \in \Gamma}\) is a Gabor frame generated by a window function $h \in L^2(\R)$, and called a \textit{dual window} to $g$. In this case,  \(\set{h_{\gamma}}_{\gamma \in \Gamma}\) is a \textit{dual Gabor frame} to \(\set{g_{\gamma}}_{\gamma \in \Gamma} \)

 The full understanding of all window functions $g$, and all positive parameters $a, b$ such that $\mathcal{G}(g, a\mathbb{Z}\times b\mathbb{Z})$ is a Gabor frame for $L^2(\R)$, is still incomplete. Nonetheless, the last few decades of research have resulted in an impressive body of work on Gabor systems. For example, there exists a Nyquist-type condition dating back to the work of Daubechies~\cite{daubechies1990wavelet}, in which it is proved that when $ab >1$ is rational then $\mathcal{G}(g, a\mathbb{Z}\times b\mathbb{Z})$ is incomplete, for the irrational case we refer to \cite{MR1096368}. Furthermore, if $\mathcal{G}(g, a\mathbb{Z}\times b\mathbb{Z})$ is a Gabor frame for $L^2(\R)$ then $ab \leq 1$. The critical case $ab =1$ corresponds to the case when it is possible for $\mathcal{G}(g, a\mathbb{Z}\times b\mathbb{Z})$ to be an orthonormal or a Riesz basis. In such a case, the window function is necessarily poorly localized in the time-frequency plane as dictated by the Balian-Low Theorem \cite{MR1350699}. 
We refer to \cite{heil2007history} for an history of the density condition in Gabor analysis. We also refer to some of the foundational work of I.~Daubechies, who in the setting of  coherent state frames in the Bargmann-Fock conjectured that a Gabor system $G(g,a,b)$ on the rectangular lattice $a\Z\times b\Z$ is a frame if and only if $ab<1$ for the Gaussian, \cite{daubechies1988frames}.

Still, it is in general unknown how to characterize the so-called
\textit{full frame set} 
\(\frameset(g)\) of a given window \(g \in L^2(\R)\)
\begin{equation}
	\frameset(g) = \set{\;\Gamma \subset \R^2 : \mathcal{G}(g, \Gamma) =
		\set{g_{(\gamma_1, \gamma_2)}}_{(\gamma_1,\gamma_2) \in \Gamma}
		\text{ is a frame over } L^2(\R)\;}.
\end{equation}

Restricting
\(\Gamma\) to the lattice \(\Lambda = a\Z \times b\Z\) for positive
constants \(a,b\), we arrive at what is referred to as the
\textit{frame set problem}. Insights offered by this variant of the
full frame set problem are hoped to enable further investigation of the
more general problem.

Celebrated results in the early 90s characterized the frame set for the
Gaussian window as the open region $\{(a, b) \in \R^2_+: \, 0<ab<1\}$;  see \cite{seip1992density, seip1992density_bis, Lyubarskii92}. 
More recently, a similar result was obtained for classes of functions, including totally positive functions,  \cite{Grosto, Kloosto, grochenig2018sampling, grochenig2023totally}. For an overview of the state of this fascinating problem we refer to \cite{Olehon, grochenig2014mystery}. For some historical perspective, we refer to the work of Daubechies, Grossmann and Meyer, \cite{daubechies1986painless},   on painless non-orthogonal expansions from which one concludes that the rectangle $(0, 2)\times (0, 1/2)$ belongs to the frame set for many window functions $g$.

For $g\in L^2(\R), g\neq 0$, and $f\in L^2(\R)$, let $V_gf$ be the function defined on $\R^2$ by $$V_{g}f(x, \omega)=\int_{\R}f(t)\overline{g(t-x)}e^{-2\pi i t\omega}\, dt.$$ The modulation space $M^{1}(\R)$, also known as the Feichtinger algebra $S_0(\R)$, is the dense Banach subspace of $L^2(\R)$ consisting of all functions $g$ such that $V_gg\in L^1(\R^2)$ \cite{BenOko20, Groc2001}. It was proved in \cite{FeiKai04} that if the window function $g$ is in the modulation space $ M^1(\R)$  then its frameset $\mathcal{F}(g)$ is an open subset of $\{(a, b) \in \R^2_+: \, 0<ab<1\}$.

Examples of functions in $M^1(\mathbb{R})$ are the B-splines of order $N=2$ or higher, which are  even, compactly supported functions defined
recursively as
\begin{equation}\label{eq:sn}
	s_1(x) = \chi_{[-1/2, 1/2]}(x), \qquad s_N(x) = s_1(x) * s_{N-1}(x).
\end{equation}
They are of special interest to study for a number of reasons:
B-spline wavelets, which have a construction similar to Gabor
systems, have been enormously successful in finite element methods
\cite{xiang2006construction, jiawei2008new, xiang2006identification, wei2021b}. Gabor systems
corresponding to B-spline window functions have also shown promise
in acoustic scattering problems \cite{Kreuzer2019}.

The frame set for the first order B-spline $s_1$ was completely
characterized in \cite{dai2016abc}, and work on higher order
B-spline has gradually accumulated in the past decade (see
\cite{AtiKouOko1} \cite{atindehou2023frame} \cite{Olehon} \cite{Olehon1}
\cite{Ole4} \cite{ghosh2025gabor} \cite{ghosh2023obstructions} 
\cite{Grojan} \cite{Lemniel} \cite{Kloosto} for details). As noted above,
 \(\frameset(s_2)\) is an open set in $\{(a, b) \in \R^2_+: \, 0<ab<1\}$ \cite{FeiKai04},
but a full characterization of this set remains an open question. 


The authors of \cite{AtiKouOko1} introduced a framework for
determining the frame sets of compactly supported functions, including
the B-spline of order \(N \geq 2\), that unified many known results
on the frame sets of B-spline. Similarly, the authors of  \cite{atindehou2023frame} 
used a similar linear algebra based approach to shed new light
on the set \(\frameset(s_2)\) and enlarged the known frame set region by
deriving sufficient conditions for invertibility of the linear system
relating a frame with its dual. The key idea is based on the following result (see \cite[Theorem
2.10]{Ole1}): for \(g,h \in L^2(\R)\) and \(a,b > 0\), the Bessel
sequences \(\mathcal{G}(g, a\Z \times b\Z)\) and \(\mathcal{G}(h, a\Z \times
b\Z)\) are dual frames for \(L^2(\R)\) if and only if
\begin{equation}\label{eq:dualgeneral}
	\sum_{k \in \Z} \overline{g(x-\ell/b+ka)} h(x+ka) =
		b\delta_{\ell,0}, \forall \ell \in \Z \mbox{ and a.e. } x \in
		\left[-\frac{a}{2},\frac{a}{2}\right].
\end{equation}
The authors of \cite{atindehou2023frame} showed the existence of a
bounded compactly supported function \(h \in L^2(\R)\) that solves
\eqref{eq:dualgeneral} when \((a,b)\) belongs to a family of sets
parameterized by \(m\). To do this, the authors simplified and rewrote
\eqref{eq:dualgeneral} as a matrix-vector equation using the compact
support of \(s_2\) and derived sufficient conditions for invertibility
of the system.

\subsection{Contributions}

This paper aims to investigate the frameset of continuous, compactly supported functions. To this end, we initiate an analysis of the Gram matrix of the corresponding Gabor systems.
We build upon the structural investigations introduced in \cite{buckspectral}, and provide a spectral and structural analysis of the Gram matrices of Gabor systems generated by a class of continuous compactly supported windows, with a specific focus on B-splines of order $N$. In particular, we identify a unique structural hierarchy within these Gram matrices; we demonstrate in Theorem \ref{thm:offdiagTH} that certain a finite sub-Gramian  is a block-Toeplitz matrix where each constituent block admits a Hadamard factorization into a real-valued Toeplitz matrix and a rank-one Hankel matrix. Secondly, we derive an explicit analytic formulation for the Laurent symbols $t^{[\ell]}(x)$ associated with these blocks, proving in Theorem \ref{thm:tl_sinc} that they are governed by periodic sums of $\text{sinc}^{2N}$ kernels. Using this formulation, we establish an $O(\ell^{-N})$ spectral decay rate for the off-diagonal blocks, which provides the first formal mathematical justification for the block-diagonal dominance observed in the numerical analysis of spline-based Gabor systems. Finally, we leverage the theory of asymptotically equivalent circulant matrices to provide a pathway for estimating spectral bounds, specifically demonstrating how rational lattice parameters $a \in \mathbb{Q}$ induce the decay of the lower frame bounds in truncated systems.

The remainder of this paper is organized as follows. Section~\ref{sec:notations} sets the notations use throughout the paper. Section \ref{sec:gram_block} establishes the internal block structure of the Gram matrix and provides the proof for the Toeplitz-dot-Hankel factorization. Section \ref{sec:asymptotic} presents the spectral analysis of the block operators $\mathscr{G}^{[\ell]}$ via their associated Laurent symbols. We also introduce a sequence of asymptotically equivalent circulant matrices used for finite-section eigenvalue estimation. In addition, we discuss the implications of these spectral results on the frame set problem, specifically regarding the behavior of frame bounds on rational and irrational lattices. Finally, Section \ref{sec:conclusion} summarizes our findings and proposes future directions.

\section{Notation and preliminaries}\label{sec:notations}

Given a discrete collection of functions 
\(\set{g_\lambda}_{\lambda \in \Lambda}\) of \(L^2(\R)\), we define the following operators: 
\begin{itemize}
	\item The \textit{synthesis operator} \(\mathscr{T}\) is a mapping from
	      \(\C^{\Lambda}\) to \(L^2(\R)\) defined as
		  \begin{equation}
			  \mathscr{T}\seq{c_{\lambda}}_{\lambda \in \Lambda} = \sum_{\lambda
			      \in \Lambda} c_{\lambda} g_{\lambda}.
		  \end{equation}
	\item Its (formal) adjoint is the \textit{analysis operator} \(\mathscr{T}^*\) is given by
		  \begin{equation}
		      \mathscr{T}^*f = \seq{\ip{f, g_{\lambda}}}_{\lambda \in \Lambda}.
		  \end{equation}
	\item The \textit{frame operator} \(\mathscr{S}\) defined on
	      \(L^2(\R)\) is the composition of the synthesis and
		  analysis operators and given by  \(\mathscr{T}\mathscr{T}^*\)
		  \begin{equation}
		      \mathscr{S}f = \sum_{\lambda \in \Lambda} \ip{f, g_{\lambda}}
			      g_{\lambda}.
		  \end{equation}
	\item The \textit{Gram operator} \(\mathscr{G}\) is defined on 
	      \(\C^{\Lambda}\) as the composition of the analysis and
		  synthesis operators  \(\mathscr{T}^*\mathscr{T}\) and given  by
		  \begin{equation}
			  \mathscr{G}\seq{c_{\lambda}}_{\lambda \in \Lambda} =
			      \seq{\sum_{\lambda' \in \Lambda} c_{\lambda'}
				  \ip{g_{\lambda'}, g_{\lambda}}}_{\lambda \in \Lambda}.
		  \end{equation}
		  The matrix corresponding to \(\mathscr{G}\) is
		  referred to as the \textit{Gram matrix}. 
\end{itemize}
The frame operator and the Gram operator are related in that the two
share a spectrum, with the possible exception of zero, which may be
present in the spectrum of the Gram operator but not that of the frame
operator. For this reason, while the frame operator may be more readily
amenable to a finite sections method the presence of zero in the
spectrum of the Gram operator means that the spectra of the finite sections of the Gram
operator do not necessarily converge to the non-zero portion of the spectrum of the Gram operator (see \cite{christensen2005finite}).
This manifests as \textit{pollution} filling the spectral gap between
zero and \(\norm{S^{-1}}^{-1}\) when taking finite sections of \(\mathscr{G}\).

For the rest of the paper the discrete systems we consider are Gabor systems $\mathcal{G}(g, \Lambda)$ generated by a window function $g\in L^2(\R)$ and where $\Lambda$ is the lattice $a\Z \times b\Z$. We summarize in Table~\ref{tab:notation} below the notations to be used in the sequel. 

\begin{center}
	\renewcommand{\arraystretch}{1.3}
	\begin{tabular}{cl}
		\toprule
		\textbf{Symbol} & \textbf{Description} \\
		\midrule
		\(\mathcal{G}(g,S)\) & Gabor system \(\set{T_y M_{\omega} g :
			(y,\omega) \in S}\) with \(T_y f(x) = f(x-y)\), \(M_{\omega}f(x)
			 = e^{2\pi i\omega x} f(x)\) \\
		\(\Lambda\) & Infinite lattice \(a\Z \times b\Z\)\\
		\(\mathscr{G}\) & Infinite Gram matrix/operator over
			\(\Lambda\) \\
		\hline
		\(\Lambda_n\) & Finite shifted lattice \(\set{(ka,jb) :
			j,k = -\tfrac{n-1}{2}, \hdots, \tfrac{n-1}{2}}\) \\
		\(g_{j,k}\) & Atom \(g_{j,k}(x) \defeq T_{ka}M_{jb}g(x) =
			g(x-ka)e^{2\pi ijb x}\) \\
		\(G_n\) & Truncated \(n^2 \times n^2\) Gram matrix over
			\(\Lambda_n\) \\
		\(G_n^{[\ell]}\) & \(n \times n\) block at modulation difference
			\(\ell\) (\(\ell = j-j'\)), \(\abs{\ell} < n\) \\
		& Entries \((G_n^{[\ell]})_{k,k'} \defeq
			\ip{g_{j,k},g_{j-\ell,k'}}\), \(0 \leq k,k' \leq n-1\) \\
		\(\set{\lambda_m(C_n)}_{m=1}^n\) & Ordered eigenvalues of an
			\(n \times n\) matrix \(C_n\) \\
		\bottomrule
	\end{tabular}
	\vskip1em
	\par Table 1: Notation for time-frequency shifts, Gram matrices, and
	block structure.
	\label{tab:notation}
\end{center}

Fix \(a,b > 0\) with \(ab < 1\).  Let \(n \geq 3\) be an odd natural
number such that \(n > \frac{2}{a} + 1\), and consider  the
index set be \(\mathcal{I}_n \defeq \set{-\frac{n-1}{2}, \ldots,
\tfrac{n-1}{2}}\).

In the sequel we will focus on time-frequency shifts of the
$N$th order B-spline, \(g = s_N\), defined in \eqref{eq:sn}:
\begin{equation}
	g_{j,k}(x) = g(x - ka)e^{2\pi i jb x}, \qquad j,k \in \mathcal{I}_n
\end{equation}
in which the time-frequency shifts belong to a set of enumerated
elements,
\begin{equation*}
	\Lambda_n = \set{(ka, jb)}_{j,k \in \mathcal{I}_n}
\end{equation*}
The infinite Gram matrix corresponding to the Gabor system
\eqref{eq:Gabor_glambda} is defined entrywise by
\begin{equation}\label{eq:graminf_entry}
	(\mathscr{G})_{(j,k),(j',k')} \defeq \ip{g_{j,k}, g_{j',k'}} \qquad
		j, j', k, k' \in \Z.
\end{equation}

For each odd \(n \in \mathbb{N}\) with \(n \geq 3\) we introduce the truncated
system \(\set{g_{\lambda}}_{\lambda \in \Lambda_n}\). Then the sets
\(\Lambda_{2k+1}\) are nested:
\begin{equation*}
	\Lambda_3 \subset \Lambda_5 \subset \hdots, \qquad
		\bigcup_{k = 0}^\infty \Lambda_{2k+1} = \Lambda.
\end{equation*}

For the finite index set \(\mathcal{J}_n \defeq \mathcal{I}_n \times
\mathcal{I}_n\), let \(P_n : \ell^2(\Z^2) \to \ell^2(\mathcal{J}_n)\) denote
the coordinate projection onto \(\mathcal{J}_n\). Then the finite Gram
matrix \(G_n\) is the corresponding finite section
\begin{equation*}
	G_n = P_n \mathscr{G} P_n^*.
\end{equation*}
As \(n \to \infty\) the projections \(P_n\) converge strongly to the
identity, so \(G_n \to \mathscr{G}\) in the strong operator topology. The
entries of \(G_n\) are given by
\begin{equation}\label{eq:gram_entry}
	(G_n)_{(j,k), (j',k')} \defeq \ip{g_{j,k}, g_{j',k'}}, \qquad
		j, j', k, k' \in \mathcal{I}_n.
\end{equation}

We may view \(G_n\) as an \(n^2 \times n^2\) block matrix indexed by the
modulation indices \(j,j' \in \mathcal{I}_n\), where each block
\(G_n^{(j,j')} \in \C^{n \times n}\) corresponds to a fixed pair
\(j,j'\) and has entries
\begin{equation*}
	(G_n^{(j,j')})_{k,k'} = \ip{g_{j,k}, g_{j',k'}} \qquad
		k, k' \in \mathcal{I}_n.
\end{equation*}
This block structure will be exploited to describe the Toeplitz
properties and structural behavior of \(G_n\) and its infinite lattice
limit \(\mathscr{G}\).

Recall that a square \textit{Toeplitz matrix} \(A \in \C^{n \times n}\)
is a matrix whose entries obey the relation
\begin{equation*}
	A_{k,k'} = A_{k+1,k'+1} = a_{k-k'} \qquad
		k, k' = 0, 1, \ldots, n-1.
\end{equation*}

A \textit{Circulant matrix} is a Toeplitz matrix \(C \in \C^{n \times
n}\) having the form
\begin{equation*}
	C_{k,k'} = c_{k-k' \mod n} \qquad
		k, k' = 0, 1, \ldots, n-1.
\end{equation*}

A Hermitian matrix \(T\) has an \(n \times n\) \textit{block-Toeplitz}
structure with \(k \times k\) blocks if
\begin{equation*}
	T = \begin{pmatrix}
		A^{[0]} & A^{[1]} & \cdots & A^{[n-1]} \\[1ex]
		A^{[-1]} & A^{[0]} & \cdots & A^{[n-2]} \\[1ex]
		\vdots & \vdots & \ddots & \vdots \\[1ex]
		A^{[-(n-1)]} & A^{[-(n-2)]} & \hdots & A^{[0]}
	\end{pmatrix}
\end{equation*}
with
\begin{equation*}
	A^{[j]} \in \C^{k \times k}, \qquad A^{[j]} = (A^{[-j]})^*, \qquad
		j = -n+1, \hdots, n-1
\end{equation*}
If the matrices in the block-Toeplitz matrix are themselves Toeplitz,
then the matrix is \textit{Toeplitz-block-Toeplitz}.

\section{Toeplitz structures in the Gram matrix for $\mathcal{G}(s_N, a\mathbb{Z}\times b\mathbb{Z} )$}\label{sec:gram_block}

In this section we show first that the enumeration of $\Lambda_n$ can be done without loss of generality when considering the spectrum of ${G}_n$. This is because other orderings will be permutations of the rows and columns of ${G}_n$ that are simple similarity relations that leave the spectrum unchanged. Then, we show that $G_n$ has block structure with blocks that have an explicit representation as a Hadamard product of a Toeplitz and rank-one Hankel matrix. We then provide resulting properties of these matrices.



\begin{lemma} \label{lem:shift-invariance}
Let $s, t \in \mathbb{R}$ and define the shifted set:
\[
\widetilde{\Lambda_n} := \Lambda_n+(as, bt),
\]
and the corresponding Gabor atoms
\begin{equation}
\tilde{g}_{j,k}(x) = g(x - (ka+sa))e^{2\pi i (jb+tb) x}, \qquad j,k\in \mathcal{I}_n
\end{equation}

Then, for any $(j,k), (j',k') \in \Lambda_n$, the new Gram matrix $\tilde{G}_n$ has entries
\[
\langle \tilde{g}_{j,k}, \tilde{g}_{j',k'}\rangle = e^{2\pi i b as (j - j')} \langle g_{j,k}, g_{j',k'} \rangle.
\]
The shifting factor $s$ $(s=1/ab)$ can be chosen so that $\tilde{G}_n=G_n$ is unchanged. Furthermore, the spectra of $G_n$ and $\tilde{G}_n$ are identical.
\end{lemma}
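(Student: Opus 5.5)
The plan is a direct computation from the definition of the inner product, followed by the observation that the resulting phase factor comes from conjugation by a diagonal unitary matrix. First I would write out
\[
\langle \tilde g_{j,k},\tilde g_{j',k'}\rangle=\int_{\R} g(x-ka-sa)\,\overline{g(x-k'a-sa)}\,e^{2\pi i (j+t)bx}e^{-2\pi i (j'+t)bx}\,dx .
\]
The modulation shift $tb$ cancels immediately in the product of exponentials, leaving $e^{2\pi i (j-j')bx}$. So the frequency offset $t$ plays no role at all.

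Next I would substitute $x=y+sa$. This is legitimate because Lebesgue measure on $\R$ is translation invariant, and the integral converges absolutely since $g$ is compactly supported and continuous. The substitution turns the translates back into $g(y-ka)\overline{g(y-k'a)}$ and pulls out the constant $e^{2\pi i (j-j')bsa}$. What remains is precisely $\langle g_{j,k},g_{j',k'}\rangle$, which gives the claimed identity
\[
\langle \tilde g_{j,k},\tilde g_{j',k'}\rangle = e^{2\pi i\, bas\,(j-j')}\langle g_{j,k},g_{j',k'}\rangle .
\]
For the invariance statement I would choose $s=1/(ab)$, or more generally any $s\in \frac{1}{ab}\Z$. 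Then $bas(j-j')\in\Z$, every phase factor equals $1$, and so $\tilde G_n=G_n$ entrywise.

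For the spectral statement with arbitrary $s$, I would define the diagonal unitary matrix $D=\mathrm{diag}\big(e^{2\pi i\,bas\,j}\big)_{(j,k)\in\mathcal{J}_n}$, indexed by the same enumeration as $G_n$. The identity above then says exactly that $\tilde G_n = D\,G_n\,D^*$. A unitary similarity preserves eigenvalues with multiplicities, so $\Spec(\tilde G_n)=\Spec(G_n)$. The same argument applies verbatim to the infinite Gram operator $\mathscr{G}$ with $D$ a diagonal unitary on $\ell^2(\Z^2)$.

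There is no real obstacle here. The only care needed is bookkeeping: keeping the conjugation on the second factor of the inner product so that the sign of $j-j'$ matches the statement, and noting that the phase depends only on the modulation indices. That dependence is what makes $D$ well defined and consistent with the block structure indexed by $j$.
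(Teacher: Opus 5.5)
Your proposal is correct and follows the paper's proof essentially step for step: the same translation $x=y+sa$ produces the phase $e^{2\pi i\, bas(j-j')}$, and your diagonal unitary $D=\mathrm{diag}(e^{2\pi i\, bas\, j})_{(j,k)\in\mathcal{J}_n}$ is exactly the paper's $U\otimes I_n$, which gives $\tilde G_n=DG_nD^*$ and hence equal spectra. Your two additional remarks are accurate small refinements of the paper's argument: the frequency offset $t$ cancels, and any $s\in\frac{1}{ab}\mathbb{Z}$, not only $s=1/(ab)$, makes $\tilde G_n=G_n$.
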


\begin{proof}
The proof is a simple change-of-variables $u=x-as$ in the inner product seen in Equation \ref{eq:gram_entry}:
\begin{align*}
\langle \tilde{g}_{j,k}, \tilde{g}_{j',k'} \rangle
&= \int_{\mathbb{R}} g(x - ka - sa) e^{2\pi i (jb + tb)x} \cdot g(x - {k'}a - sa) e^{-2\pi i ({j'}b + tb)x} \, dx \\
&= \int_{\mathbb{R}} g(u - ka) e^{2\pi i jb (u + sa)} \cdot g(u - {k'}a) e^{-2\pi i {j'}b (u + sa)} \, du \\
&= e^{2\pi i b s a(j-j')} \int_{\mathbb{R}} g(u - ka) g(u - {k'}a) e^{2\pi i (jb - {j'}b) u} \, du \\
&= e^{2\pi i b s a(j-j')} \langle g_{j,k}, g_{j',k'} \rangle.
\end{align*}

Setting $U=\mathrm{diag}(e^{2\pi i a b s\, j})_{j=0}^{n-1}$,
\[
\tilde G_n=(U\otimes I_n)\,G_n\,(U\otimes I_n)^*,
\]
so $\tilde G_n$ is unitarily similar to $G_n$ and has the same spectrum.

\end{proof}

Having established that lattice shifts induce only phase modifications, we next analyze the internal block structure of $G_n$ itself. The below lemma shows that the finite Gram matrix $G_n$ is structured as an $n^2\times n^2$ block matrix whose $(j, j')$-th block is given by an $n\times n$ matrix $G_n^{[\ell]}$ with $\ell = j - j'$.

\begin{lemma} \label{lem: diag_blocks}
The matrix ${G}_n$ given by \eqref{eq:gram_entry} is block-Toeplitz with blocks that are banded for $n>\frac{2}{a}+1$. 
\end{lemma}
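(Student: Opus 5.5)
We compute the $(j,k),(j',k')$ entry of $G_n$ directly from \eqref{eq:gram_entry}. Since $g=s_N$ is real-valued, the inner product is
\begin{equation*}
\ip{g_{j,k},g_{j',k'}}=\int_{\R} g(x-ka)\,g(x-k'a)\,e^{2\pi i (j-j')b x}\,dx .
\end{equation*}
This depends on $j,j'$ only through $\ell=j-j'$. So the $(j,j')$ block $G_n^{(j,j')}$ equals a matrix $G_n^{[\ell]}$ that depends only on $\ell$. This gives the block-Toeplitz structure of $G_n$ with respect to the modulation index.

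We also check the Hermitian relation required in the definition. The integrand is conjugated when $\ell\mapsto-\ell$ and $k\leftrightarrow k'$ are swapped, so $G_n^{[-\ell]}=(G_n^{[\ell]})^*$.

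Next we analyze a single block. Substituting $u=x-k'a$ gives
\begin{equation*}
(G_n^{[\ell]})_{k,k'}=e^{2\pi i \ell b k' a}\int_{\R} g(u-(k-k')a)\,g(u)\,e^{2\pi i \ell b u}\,du .
\end{equation*}
We do not need the full Toeplitz factorization here. The key point is that the integral depends only on $k-k'$, apart from a phase.

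\textbf{Bandedness.} This is the step that uses the hypothesis. The B-spline $s_N$ is supported in $[-N/2,N/2]$, so the product $g(x-ka)g(x-k'a)$ vanishes identically whenever $|k-k'|a\ge N$. Hence every entry with $|k-k'|\ge \lceil N/a\rceil$ is zero. Each block is therefore banded, with bandwidth at most $\lceil N/a\rceil-1$.

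The condition $n>\frac{2}{a}+1$ makes this band strictly narrower than the matrix. For the paper's normalization of the support length this gives $2/a<n-1$, so the zero entries actually appear and the banded structure is nontrivial rather than vacuous.

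\textbf{Main obstacle.} The main difficulty is this last point: matching the stated threshold $2/a$ to the support length of the window. For $N=2$ the support is $[-1,1]$, of length $2$, and then $|k-k'|\ge 2/a$ forces the entry to vanish. For general $N$ we would replace $2$ by $N$, or more generally by the support length of $g$. I would state the proof for a continuous compactly supported $g$ with $\operatorname{supp} g\subset[-L/2,L/2]$, and observe that the bandwidth is governed by $L/a$.
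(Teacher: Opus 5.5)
Your proof is correct and follows the paper's argument. The entry $\int g(x-ka)g(x-k'a)e^{2\pi i (j-j')bx}\,dx$ depends on $j,j'$ only through $\ell=j-j'$, and the support of $s_N$ in $[-N/2,N/2]$ kills every entry with $|k-k'|a\ge N$; you also check $G_n^{[-\ell]}=(G_n^{[\ell]})^*$, which the paper leaves implicit. Your remark about the threshold is confirmed by the paper itself: its proof actually invokes $n>\frac{N}{a}+1$ rather than the $\frac{2}{a}+1$ in the statement, so the stated $2/a$ is just the case $N=2$.
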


\begin{proof}

For $j, j', k, k'\in \mathcal{I}_n$, the $(j,j')-$th block in $G_n$
has its $(k,k')-$th entry given by
\begin{align}
\langle g_{j,k}, g_{j',k'} \rangle = \int_{\mathbb{R}} g(x - {ka}) g(x - {k'a}) e^{2\pi i ({jb} - {j'b})x} \, dx.
\end{align}
Due to the fixed modulation phase $e^{2 \pi i ({jb} - {j'b})x}$ appearing above, the blocks depend only on the modulation difference $\ell:=j-j'$, i.e. $G_n$ is
block–Toeplitz in $j$, and for $|\ell|\le n-1$ we write
    \begin{align*}
     (G_n^{[\ell]})_{k,k'}:=\langle g_{j,k}, g_{j-\ell,k'} \rangle = \int_{\mathbb{R}} g(x - {ka}) g(x - {k'a}) e^{2\pi i ({jb} - {(j-\ell)b})x} \, dx, 
    \end{align*}
This proves that $G_n$ is block-Toeplitz and has the form
\begin{align*}
        G_n &=
\begin{pmatrix}
G_n^{[0]} & G_n^{[1]} & G_n^{[2]} & \cdots & G_n^{[n-1]} \\
G_n^{[-1]} & G_n^{[0]} & G_n^{[1]} & \cdots & G_n^{[n-2]} \\
G_n^{[-2]} & G_n^{[-1]} & G_n^{[0]} & \cdots & G_n^{[n-3]} \\
\vdots & \vdots & \vdots & \ddots & \vdots \\
G_n^{[-(n-1)]} & G_n^{[-(n-2)]} & G_n^{[-(n-3)]} & \cdots & G_n^{[0]}
\end{pmatrix}.
\end{align*}
    
The compact support of $g=s_N$ on $[-\frac{N}{2},\frac{N}{2}]$ produces
\begin{align*}
    (G_n^{[\ell]})_{k,k'}&=
    \begin{cases}
e^{\pi i b l ({ka} + {k'a})}
\displaystyle\int_{{k'a} - 1}^{{ka} + 1} (x - {k'a} + 1)(-x + {ka} + 1) \cos(2\pi b\ell x) \, dx, & |{ka} - {k'a}| \leq N \\[2ex]
0, & \text{otherwise}
\end{cases}.
\end{align*}
Since the integral is zero when $|{k'a}-{ka}| > N$, or $|k-k'|>\frac{N}{a}$, the blocks ${G}_n^{[\ell]}$ are banded (recall that $n > \frac{N}{a}+1$).



\end{proof}

The next theorem describes the structure of the blocks given in Lemma \ref{lem: diag_blocks}

\begin{theorem} \label{thm:offdiagTH}
Let $G_n\in \mathbb{C}^{n^2\times n^2}$ be the truncated Gram matrix corresponding to a Gabor system generated by $g=s_N$ over $\Lambda_n$. Then each block $G_n^{[\ell]}\in \mathbb{C}^{n\times n}$, $|\ell|\leq n-1$ can be written as a Hadamard (entrywise) product:
\begin{align}
G_n^{[\ell]} = T_n^{[\ell]} \circ H_n^{[\ell]},\label{eqn:gram:toeplitz-dot-hankel}    
\end{align}

where $T_n^{[\ell]} \in \R^{n\times n}$ is a real-valued, symmetric Toeplitz matrix, and $H_n^{[\ell]}\in \C^{n\times n}$ is a rank-one Hankel matrix with unimodular entries.
\end{theorem}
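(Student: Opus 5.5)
The plan is to compute the entries of $G_n^{[\ell]}$ directly and split off the phase coming from the midpoint of the two translates. By Lemma~\ref{lem: diag_blocks},
\[
(G_n^{[\ell]})_{k,k'}=\int_{\R} g(x-ka)\,g(x-k'a)\,e^{2\pi i b\ell x}\,dx .
\]
I will substitute $x = u + \tfrac{(k+k')a}{2}$, i.e.\ center at the midpoint of $ka$ and $k'a$. Writing $d = (k-k')a/2$, the integrand becomes $g(u-d)\,g(u+d)\,e^{2\pi i b\ell u}$ times the constant factor $e^{\pi i b\ell a(k+k')}$. This gives
\[
(G_n^{[\ell]})_{k,k'} = e^{\pi i b\ell a(k+k')}\,\tau^{[\ell]}\!\left(\tfrac{(k-k')a}{2}\right), \qquad \tau^{[\ell]}(d):=\int_{\R} g(u-d)\,g(u+d)\,e^{2\pi i b\ell u}\,du .
\]
From this I define $(T_n^{[\ell]})_{k,k'}:=\tau^{[\ell]}((k-k')a/2)$ and $(H_n^{[\ell]})_{k,k'}:=e^{\pi i ab\ell (k+k')}$, so that \eqref{eqn:gram:toeplitz-dot-hankel} holds entrywise.

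Next I verify the claimed properties of $H_n^{[\ell]}$. Its entries depend only on $k+k'$, so it is Hankel, and they are clearly unimodular. It is rank one because it factors as $H_n^{[\ell]}=v v^{T}$ with $v_k = e^{\pi i ab\ell k}$.

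For $T_n^{[\ell]}$, the entries depend only on $k-k'$, so it is Toeplitz. Reality and symmetry both use the evenness of $g=s_N$, which follows from the recursive definition \eqref{eq:sn} since convolutions of even functions are even, together with $g$ being real-valued.

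1. \emph{Symmetry.} Substituting $u\mapsto -u$ in $\tau^{[\ell]}(-d)$ gives $\int g(-u+d)\,g(-u-d)\,e^{-2\pi i b\ell u}\,du=\int g(u-d)\,g(u+d)\,e^{-2\pi i b\ell u}\,du$. So symmetry reduces to reality of $\tau^{[\ell]}$.
2. \emph{Reality.} The weight $u\mapsto g(u-d)\,g(u+d)$ is even in $u$, again by evenness of $g$. Hence the sine part of $e^{2\pi i b\ell u}$ integrates to zero, and
\[
\tau^{[\ell]}(d)=\int_{\R} g(u-d)\,g(u+d)\cos(2\pi b\ell u)\,du\in\R .
\]
This expression is also even in $d$, which gives symmetry.

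The only delicate point, and what I expect to be the main obstacle, is bookkeeping. The paper's index set $\mathcal I_n$ is symmetric while the table indexes blocks by $0\le k\le n-1$. Reindexing $k\mapsto k+c$ multiplies $H_n^{[\ell]}$ by the constant unimodular factor $e^{2\pi i ab\ell c}$. This can be absorbed into $v$, or handled via Lemma~\ref{lem:shift-invariance}, so $H_n^{[\ell]}$ stays rank-one Hankel and $T_n^{[\ell]}$ is untouched. Finally, I will note that the integral is finite and vanishes for $|k-k'|a>N$ by compact support, consistent with the banded structure established in Lemma~\ref{lem: diag_blocks}.
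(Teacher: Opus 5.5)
Your proposal is correct and is essentially the paper's proof: the same midpoint substitution $x\mapsto x+\tfrac{a}{2}(k+k')$, the same Toeplitz entries $t^{[\ell]}_{k-k'}$, and the same rank-one factorization $H_n^{[\ell]}=v^{[\ell]}(v^{[\ell]})^{\top}$ with $v^{[\ell]}_k=e^{\pi i ab\ell k}$. Your explicit use of the evenness of $s_N$ to discard the sine part justifies a step the paper leaves implicit when it writes $\cos(2\pi b\ell x)$ directly. Symmetry of $T_n^{[\ell]}$ is even more immediate than your step 1 suggests, since $d\mapsto -d$ merely swaps the two factors $g(u-d)$ and $g(u+d)$.
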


\begin{proof}
For $k,k'\in \mathcal{I}_n$, we consider the $(k,k')-$entry of the block \(G_n^{[\ell]}\) corresponding to a fixed modulation difference $\ell$:
\[
(G_n^{[\ell]})_{k,k'}:=\langle g_{j,k}, g_{j-\ell,k'} \rangle = \int_{\mathbb{R}} g(x - {ka}) g(x - {k'a}) e^{2\pi i b \ell x} \, dx.
\]

After a symmetric change of variables \(x \to  x + \tfrac{1}{2}({ka} + {k'a})\), the entries are given by:
\begin{align*}
(G_n^{[\ell]})_{k,k'}
&= \int_{\mathbb{R}} g(x + \tfrac{1}{2}({ka} + {k'a}) - {ka})\, g(x + \tfrac{1}{2}({ka} + {k'a}) - {k'a})\, e^{2\pi i b \ell (x + \tfrac{1}{2}({ka} + {k'a}))} \, dx \\
&=e^{\pi i b\ell ({ka} + {k'a}) }\int_{\mathbb{R}} g\left(x - \tfrac{a}{2} (k-k')\right) g\left(x + \tfrac{a}{2} (k-k')\right)\, \cos(2\pi b\ell x) \, dx.
\end{align*}
Define the Toeplitz matrix $T_n^{[\ell]}$ with entries given by $(T_n^{[\ell]})_{k,k'}:= t^{[\ell]}_{\,k-k'}$, where, for $j=k-k'$
\begin{align}
    t^{[\ell]}_{\,j} &:= \int_{\mathbb{R}} g\left(x - \tfrac{a}{2} j\right) g\left(x + \tfrac{a}{2} j\right)\, \cos(2\pi b\ell x) \, dx, \label{eq:tjl}
\end{align}
and define $(H_n^{[\ell]})_{k,k'}:=e^{\pi i a b \ell (k+k')}$. Then $G_n^{[\ell]}=T_n^{[\ell]}\circ H_n^{[\ell]}$. Since $H_n^{[\ell]}=v^{[\ell]}(v^{[\ell]})^{\!\top}$ with
$v^{[\ell]}=\big(e^{\pi i a b \ell\,k}\big)_{k\in\mathcal{I}_n}$, it follows that $H_n^{[\ell]}$ is rank-one Hankel. Furthermore, since $g=s_N$ has support in $[-\frac{N}{2},\frac{N}{2}]$, we have $t^{[\ell]}_{\,j}=0$ for $|j|>\lfloor N/a\rfloor$, so
$T_n^{[\ell]}$ (hence $G_n^{[\ell]}$) is banded with bandwidth $\lfloor N/a\rfloor$.
\end{proof}

We can immediately relate the spectra of $G_n$ and that of $T_n^{[\ell]}$.

\begin{corollary}\label{cor:specgntnl}
Each block $G_n^{[\ell]}$ defined in Theorem \ref{thm:offdiagTH} shares the same singular values as the corresponding Toeplitz matrix $T_n^{[\ell]}$. Consequently, the Schatten norms and condition numbers of $G_n^{[\ell]}$ and $T_n^{[\ell]}$ are identical.
\end{corollary}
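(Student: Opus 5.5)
The plan is to show that the rank-one Hankel factor in Theorem~\ref{thm:offdiagTH} acts as a pair of diagonal unitary scalings, so that $G_n^{[\ell]}$ is a two-sided unitary transform of $T_n^{[\ell]}$. Concretely, write $v^{[\ell]}=\big(e^{\pi i a b \ell k}\big)_{k\in\mathcal{I}_n}$ as in the proof of Theorem~\ref{thm:offdiagTH} and set
\[
D_\ell := \mathrm{diag}\big(e^{\pi i a b \ell k}\big)_{k\in\mathcal{I}_n}.
\]
Since $H_n^{[\ell]}=v^{[\ell]}(v^{[\ell]})^{\top}$, we have $(H_n^{[\ell]})_{k,k'}=v_k v_{k'}$. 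The Hadamard product with a rank-one matrix $uw^{\top}$ equals $\mathrm{diag}(u)\,A\,\mathrm{diag}(w)$, so I would first record the identity
\[
G_n^{[\ell]} = T_n^{[\ell]}\circ H_n^{[\ell]} = D_\ell\, T_n^{[\ell]}\, D_\ell .
\]
This is checked entrywise: $(D_\ell T_n^{[\ell]} D_\ell)_{k,k'} = e^{\pi i ab\ell k}\, t^{[\ell]}_{k-k'}\, e^{\pi i ab\ell k'}$.

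Next, because every entry of $v^{[\ell]}$ is unimodular, $D_\ell$ is unitary. Note that the right factor is $D_\ell$ and not $D_\ell^*$, so this is not a unitary similarity and the eigenvalues may change. The singular values are nevertheless preserved under multiplication by unitaries on either side. To see this, compute
\[
(G_n^{[\ell]})^*G_n^{[\ell]} = D_\ell^*\,(T_n^{[\ell]})^*\,D_\ell^*D_\ell\, T_n^{[\ell]}\,D_\ell = D_\ell^*\,(T_n^{[\ell]})^* T_n^{[\ell]}\,D_\ell,
\]
which is unitarily similar to $(T_n^{[\ell]})^*T_n^{[\ell]}$. The two Gram products therefore have the same eigenvalues, and hence $G_n^{[\ell]}$ and $T_n^{[\ell]}$ have the same singular values with multiplicity. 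Alternatively, one can transform an SVD $T=U\Sigma V^*$ into $(D_\ell U)\Sigma(D_\ell^*V)^*$ directly.

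Finally, the Schatten $p$-norms (for $1\le p\le\infty$, which includes the spectral and Frobenius norms) are functions of the singular values alone. The same holds for the condition number $\sigma_{\max}/\sigma_{\min}$, which is finite exactly when $\sigma_{\min}>0$ for both matrices simultaneously. The consequence therefore follows immediately from the singular-value statement.

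I do not expect a genuine obstacle here. The only point needing care is the first step: recognizing the Hadamard product with the rank-one, unimodular $H_n^{[\ell]}$ as diagonal scaling, and noting that the scaling is $D_\ell\,\cdot\,D_\ell$ rather than a similarity. This is why the claim is stated for singular values and not for eigenvalues.
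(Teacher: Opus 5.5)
Your proposal is correct and follows essentially the paper's argument: write $G_n^{[\ell]}=D_n^{[\ell]}T_n^{[\ell]}D_n^{[\ell]}$ with the unitary diagonal $D_n^{[\ell]}=\mathrm{diag}(v^{[\ell]})$, then note that the Gram product is unitarily similar to that of $T_n^{[\ell]}$. The paper uses $G_n^{[\ell]}(G_n^{[\ell]})^*$ where you use $(G_n^{[\ell]})^*G_n^{[\ell]}$, which makes no difference; your caveat that $D_n^{[\ell]}T_n^{[\ell]}D_n^{[\ell]}$ is not a similarity, so only singular values (not eigenvalues) carry over, is also correct, and the paper itself blurs this point in Corollary~\ref{cor:infBlocksTn}.
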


\begin{proof}
Let \( H_n^{[\ell]} =  v^{[\ell]}(v^{[\ell]})^{\!\top} \) and $T_n^{[\ell]} \in \mathbb{R}^{n\times n}$ be the Hankel and Toeplitz matrices given in the proof of Theorem~\ref{thm:offdiagTH}, where \( v^{[\ell]} \in \mathbb{C}^n \) is a vector of unit-modulus entries. Define the diagonal matrix \( D_n^{[\ell]} := \mathrm{diag}(v^{[\ell]}) \). Then the block \( G_n^{[\ell]} \) can be further written as
\[
G_n^{[\ell]} = D_n^{[\ell]} T_n^{[\ell]} D_n^{[\ell]}.
\]

Since $D_n^{[\ell]}$ is unitary,
\[
G_n^{[\ell]} G_n^{[\ell]*} = D_n^{[\ell]} T_n^{[\ell]} (T_n^{[\ell]})^* (D_n^{[\ell]})^*,
\]
which has the same eigenvalues as $T_n^{[\ell]} (T_n^{[\ell]})^*$. Thus the singular values coincide.
\end{proof}

Matrices with the decomposition in Theorem \ref{thm:offdiagTH} were coined {\it Toeplitz-dot-Hankel} matrices in \cite{townsend2018fast} and used to develop fast algorithms for conversions between coefficients in different orthogonal polynomial expansions. The top two plots in Figure \ref{fig: gram_block} show the magnitude and phase of the matrix $G_n$, and the bottom two plots show block structures that are revealed by the decomposition \eqref{eqn:gram:toeplitz-dot-hankel}: the Toeplitz matrices $T_n^{[\ell]}$ (bottom left) and the phase of the Hankel matrices $H_n$ (bottom right).

\begin{figure}[h!]
    \centering
  \includegraphics[width=\textwidth]{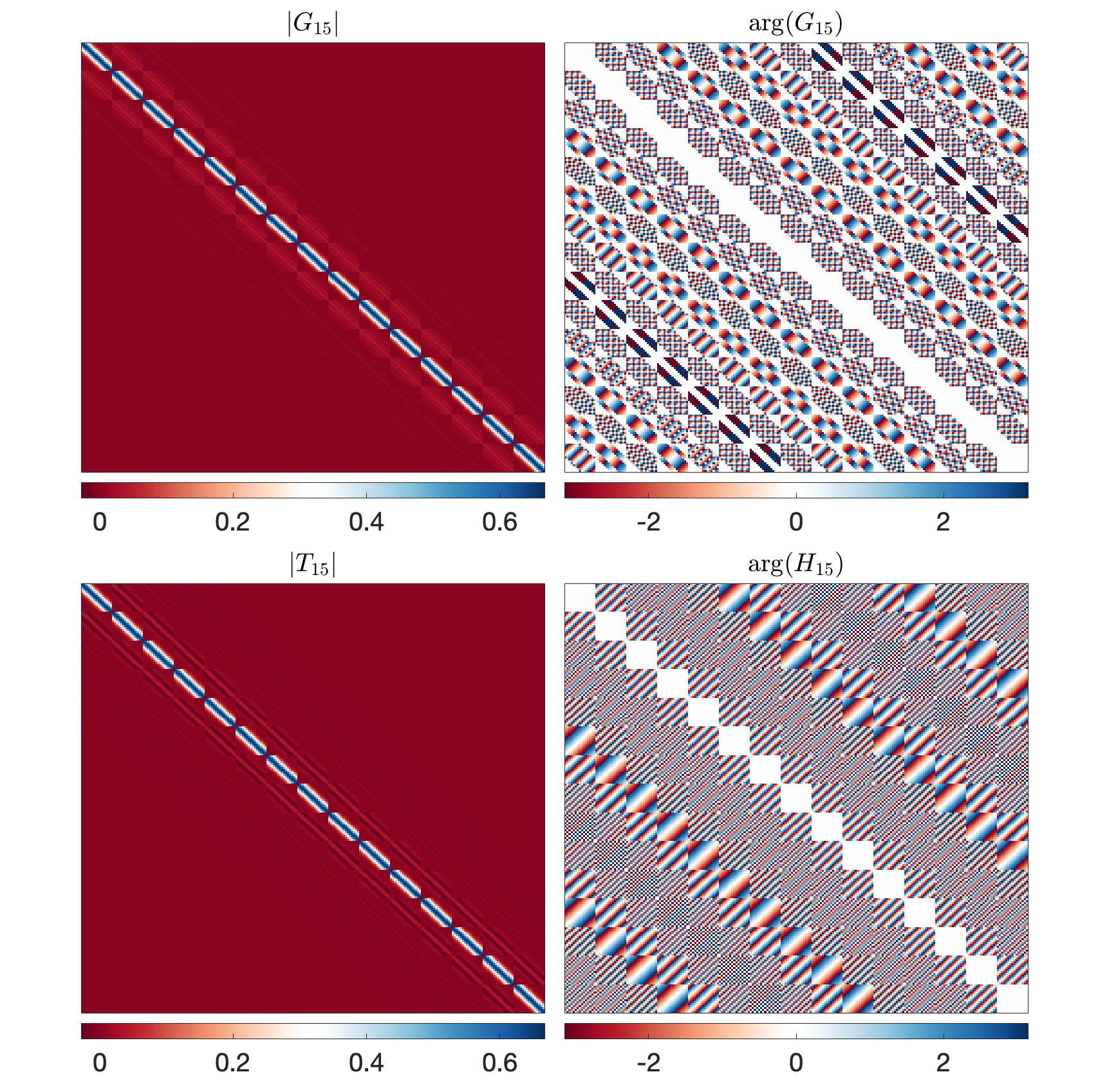}
     \caption{Top: The magnitude and phase of the finite Gram matrix $G_{15} \in \mathbb{C}^{115 \times 115}$ with blocks $G_{15}^{[\ell]} \in \mathbb{C}^{15 \times 15}$  Bottom: The magnitude of the matrix $T_{15}$ with Toeplitz blocks $T^{[\ell]}_{15}$ and the phase of the matrix $H_{15}$ with Hankel blocks $H_{15}^{[\ell]}$ in the decomposition \eqref{eqn:gram:toeplitz-dot-hankel}
     for $a=.25, b=1.5, n=15$.}
     \label{fig: gram_block}
\end{figure}

 By reordering indices according to modulation difference, $\mathscr{G}$ may also be expressed in block form with blocks $\mathscr{G}^{[\ell]}$ which allows us to extend Theorem~\ref{thm:offdiagTH} to $\mathscr{G}$.  For $\ell\in\mathbb{Z}$, the
$\ell$–block (grouping by modulation difference $j-j'=\ell$) is the
bi-infinite matrix on $\ell^2(\mathbb{Z})$ given by
\[
(\mathscr{G}^{[\ell]})_{k,k'} := \langle g_{j,k}, g_{j-\ell,k'}\rangle
= \int_{\mathbb{R}} g(x-ka)\,g(x-ka')\,e^{2\pi i b \ell x}\,dx.
\]

\begin{corollary}\label{cor:infBlocksTn}
Each bi-infinite block $\mathscr{G}^{[\ell]}$ admits a Hadamard (entrywise) factorization
\[
\mathscr{G}^{[\ell]} = T_{\infty}^{[\ell]} \circ H_{\infty}^{[\ell]},
\]
where $T_{\infty}^{[\ell]}$ is a banded, real-valued, symmetric Toeplitz operator on $\ell^2(\mathbb{Z})$
and $H_{\infty}^{[\ell]}$ is a complex-valued, rank-one Hankel operator with unimodular entries.
Moreover, letting $D_{\infty}^{[\ell]}=\mathrm{diag}(e^{\pi i a b \ell k})_{k\in\mathbb{Z}}$,
we have the diagonal unitary similarity
\[
\mathscr{G}^{[\ell]} = D_{\infty}^{[\ell]} \, T_{\infty}^{[\ell]} \, D_{\infty}^{[\ell]},
\]
so $\mathscr{G}^{[\ell]}$ and $T_{\infty}^{[\ell]}$ are unitarily equivalent and therefore share the same
spectra.
\end{corollary}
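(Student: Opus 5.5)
The plan is to repeat the entrywise computation from the proof of Theorem~\ref{thm:offdiagTH}. It never used the finiteness of the index set $\mathcal{I}_n$, so the only new work is to check that the resulting bi-infinite matrices define bounded operators on $\ell^2(\mathbb{Z})$. Fix $\ell\in\mathbb{Z}$ and $k,k'\in\mathbb{Z}$. The symmetric change of variables $x\mapsto x+\tfrac a2(k+k')$ gives
\[
(\mathscr{G}^{[\ell]})_{k,k'}=e^{\pi i ab\ell(k+k')}\,t^{[\ell]}_{k-k'},
\]
with $t^{[\ell]}_j$ exactly as in \eqref{eq:tjl}. Here the imaginary part $i\sin(2\pi b\ell x)$ drops out because $x\mapsto g(x-\tfrac a2 j)g(x+\tfrac a2 j)$ is even, since $s_N$ is even. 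So $t^{[\ell]}_j\in\mathbb{R}$. Replacing $j$ by $-j$ swaps the two factors, so $t^{[\ell]}_{-j}=t^{[\ell]}_j$. Setting $T_\infty^{[\ell]}=(t^{[\ell]}_{k-k'})_{k,k'\in\mathbb{Z}}$ and $H_\infty^{[\ell]}=(e^{\pi i ab\ell(k+k')})_{k,k'}$ then gives the Hadamard factorization. Here $T_\infty^{[\ell]}$ is real symmetric Toeplitz, and $H_\infty^{[\ell]}=v v^{\top}$ with $v=(e^{\pi i ab\ell k})_{k\in\mathbb{Z}}$. Thus $H_\infty^{[\ell]}$ depends only on $k+k'$, so it is Hankel, and it is formally rank one with unimodular entries.

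The second step is boundedness and bandedness. Because $\operatorname{supp} s_N\subset[-\tfrac N2,\tfrac N2]$, the product $g(x-\tfrac a2 j)g(x+\tfrac a2 j)$ vanishes identically once $|aj|>N$. Hence $t^{[\ell]}_j=0$ for $|j|>\lfloor N/a\rfloor$, so $T^{[\ell]}_\infty$ is banded. A banded Toeplitz matrix with finitely many nonzero diagonals is bounded on $\ell^2(\mathbb{Z})$, with norm at most $\sum_j|t^{[\ell]}_j|$. Its symbol is the real trigonometric polynomial $\sum_j t^{[\ell]}_j e^{2\pi i j\xi}$, and $T^{[\ell]}_\infty$ is self-adjoint.

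Third, with $D_\infty^{[\ell]}=\mathrm{diag}(v)$, the entrywise identity reads $\mathscr{G}^{[\ell]}=D_\infty^{[\ell]}T_\infty^{[\ell]}D_\infty^{[\ell]}$. This is an identity of bounded operators, since $D_\infty^{[\ell]}$ is a diagonal unitary and both sides agree on the standard basis. As in Corollary~\ref{cor:specgntnl}, $\mathscr{G}^{[\ell]}(\mathscr{G}^{[\ell]})^*=D T T^* D^*$ and $(\mathscr{G}^{[\ell]})^*\mathscr{G}^{[\ell]}=D^*T^*T D$. Hence the moduli $|\mathscr{G}^{[\ell]}|$ and $|T^{[\ell]}_\infty|$ are unitarily equivalent, and norms and singular-value data coincide.

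The main obstacle is the final claim of ``same spectra'' in the eigenvalue sense. The product $DTD$ is of the form $UTV$ with $U,V$ unitary, not a similarity $UTU^*$. Indeed $DTD=D(TD^2)D^{*}$, so $\mathscr{G}^{[\ell]}$ is unitarily similar to $T^{[\ell]}_\infty D^2$ rather than to $T^{[\ell]}_\infty$ itself. There are two options. The first is to state the conclusion as equality of the spectra of $|\mathscr{G}^{[\ell]}|$ and $|T^{[\ell]}_\infty|$ (singular spectra), which is what the computation actually proves. The second is to use $D^{*}$ on one side: then $D\,T\,D^{*}$ has entries $e^{\pi i ab\ell(k-k')}t^{[\ell]}_{k-k'}$. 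That operator is again Toeplitz, with symbol the shifted trigonometric polynomial $\xi\mapsto\sum_j t^{[\ell]}_j e^{2\pi i j(\xi+ab\ell/2)}$, whose range equals that of the symbol of $T^{[\ell]}_\infty$. I would first try to reconcile the Hankel phase this way. If that fails, I would prove the statement in its singular-value form, which suffices for the subsequent norm and decay estimates.
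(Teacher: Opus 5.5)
Your derivation of the factorization follows the paper's route. The paper gives no separate proof: it reruns the entrywise computation of Theorem~\ref{thm:offdiagTH} for $k,k'\in\mathbb{Z}$ and reuses the identity $G_n^{[\ell]}=D_n^{[\ell]}T_n^{[\ell]}D_n^{[\ell]}$ from Corollary~\ref{cor:specgntnl}. You supply details the paper omits:
\begin{itemize}
\item evenness of $s_N$ is what removes the sine term;
\item $t^{[\ell]}_{-j}=t^{[\ell]}_j$;
\item the banded Laurent operator is bounded;
\item $H_\infty^{[\ell]}=vv^{\top}$ is only a formal matrix, since $v\notin\ell^2(\mathbb{Z})$, so it is not really an operator on $\ell^2(\mathbb{Z})$.
\end{itemize}
Your objection to the final clause is also correct. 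The paper's argument, like that of Corollary~\ref{cor:specgntnl}, only yields $|\mathscr{G}^{[\ell]}|=(D_\infty^{[\ell]})^*\,|T_\infty^{[\ell]}|\,D_\infty^{[\ell]}$, i.e.\ equality of singular-value data, not of spectra.

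Where your plan goes wrong is in keeping the second option alive; it cannot succeed. The operator $D\,T\,D^*$ is simply not $\mathscr{G}^{[\ell]}$, so the range of its shifted symbol says nothing about $\sigma(\mathscr{G}^{[\ell]})$. The phase $e^{\pi i ab\ell(k+k')}$ is genuinely Hankel.

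In fact the eigenvalue claim is false in general. Translating both atoms by $a$ gives $(\mathscr{G}^{[\ell]})_{k+1,k'+1}=e^{2\pi i ab\ell}(\mathscr{G}^{[\ell]})_{k,k'}$. With $S$ the bilateral shift this reads
\[
S^*\,\mathscr{G}^{[\ell]}\,S=e^{2\pi i ab\ell}\,\mathscr{G}^{[\ell]},
\]
so $\sigma(\mathscr{G}^{[\ell]})$ is invariant under multiplication by $e^{2\pi i ab\ell}$. If $2ab\ell\notin\mathbb{Z}$, this number is non-real, and any subset of $\mathbb{R}$ invariant under it lies in $\{0\}$. Hence $\sigma(\mathscr{G}^{[\ell]})=[\min t^{[\ell]},\max t^{[\ell]}]$ would force $t^{[\ell]}\equiv 0$.

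Equality of spectra does hold for $\ell=0$, and more generally when $ab\ell\in\mathbb{Z}$. There $(D_\infty^{[\ell]})^2=I$, so $D_\infty^{[\ell]}=(D_\infty^{[\ell]})^*$ and $DTD$ is a genuine unitary similarity. It fails otherwise.

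So commit to your first option: prove
\begin{itemize}
\item $\sigma(|\mathscr{G}^{[\ell]}|)=\sigma(|T_\infty^{[\ell]}|)=\{|t^{[\ell]}(x)|: x\in[-\tfrac12,\tfrac12]\}$, and
\item $\|\mathscr{G}^{[\ell]}\|=\max_x|t^{[\ell]}(x)|$.
\end{itemize}
Read the statement's ``same spectra'' as a misstatement of this. This version is what the paper actually uses later: the norm reading of Lemma~\ref{lem:decay}. The genuinely spectral statements concern only $\ell=0$, where $D_\infty^{[0]}=I$.
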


We recall that a matrix $A$ indexed by $\mathcal I\times\mathcal I$ is
\emph{per-Hermitian} if $A = J\,A^*\,J$, where $J$ is the reversal
(involution) $(Jx)_k := x_{-k}$ for symmetric indexing 
$k\in\mathcal I$ (and $(Jx)_k := x_{n-k+1}$ for $k\in\{1,\dots,n\}$).

\begin{proposition}\label{prop:perHermitian}
Let $g=s_N$ be the $N$th order B-spline. Then the finite Gram matrix $G_n$ and the infinite Gram operator $\mathscr{G}$
are per-Hermitian with respect to reversal in both indices:
\[
\overline{(G_n)_{(j,k),(j',k')}} \;=\; (G_n)_{(-j',-k'),(-j,-k)},
\qquad
\overline{(\mathscr{G})_{(j,k),(j',k')}} \;=\; (\mathscr{G})_{(-j',-k'),(-j,-k)}.
\]
Consequently, when $G_n$ is viewed as an $n\times n$ block matrix in the
modulation indices, each block $G_n^{[\ell]}$ is per-Hermitian:
\[
\overline{(G_n^{[\ell]})_{k,k'}} \;=\; (G_n^{[\ell]})_{-k',-k},
\qquad k,k'\in\mathcal I_n,
\]
and likewise for $\mathscr{G}^{[\ell]}$ on $\ell^2(\mathbb Z)$.
\end{proposition}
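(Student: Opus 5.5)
The plan is a direct computation from the integral formula for the entries, using only two properties of $g=s_N$: it is real-valued and even. Both follow from \eqref{eq:sn}, since $s_1$ is real and even and convolution preserves both properties.

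First I will write, for arbitrary $j,j',k,k'\in\Z$,
\[
(\mathscr{G})_{(j,k),(j',k')}=\int_{\R} g(x-ka)\,g(x-k'a)\,e^{2\pi i b(j-j')x}\,dx ,
\]
which uses that $g$ is real. Conjugating simply flips the sign of the exponent, giving
\[
\overline{(\mathscr{G})_{(j,k),(j',k')}}=\int_{\R} g(x-ka)\,g(x-k'a)\,e^{-2\pi i b(j-j')x}\,dx .
\]
Next I will expand the right-hand side of the claim:
\[
(\mathscr{G})_{(-j',-k'),(-j,-k)}=\int_{\R} g(x+k'a)\,g(x+ka)\,e^{2\pi i b(j-j')x}\,dx .
\]
Substituting $x\mapsto -x$ and then using evenness, $g(-x+k'a)=g(x-k'a)$ and $g(-x+ka)=g(x-ka)$, turns this into the conjugated expression above. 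This proves the identity for $\mathscr{G}$.

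For the finite matrix $G_n$, I note that $\mathcal{I}_n$ is symmetric about $0$. Hence $(-j',-k'),(-j,-k)\in\mathcal{J}_n$ whenever $(j,k),(j',k')\in\mathcal{J}_n$. Since $G_n=P_n\mathscr{G}P_n^*$ has the same entries on these indices, the identity restricts to $G_n$.

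For the block statement, I will use Lemma~\ref{lem: diag_blocks}: $(G_n^{[\ell]})_{k,k'}=(G_n)_{(j,k),(j-\ell,k')}$, and this value does not depend on the choice of $j$. Applying the identity just proved gives
\[
\overline{(G_n^{[\ell]})_{k,k'}}=(G_n)_{(\ell-j,-k'),(-j,-k)} .
\]
The modulation difference here is $(\ell-j)-(-j)=\ell$, so the right-hand side equals $(G_n^{[\ell]})_{-k',-k}$. The same argument works verbatim for $\mathscr{G}^{[\ell]}$.

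As a cross-check, I can also derive the block statement from Theorem~\ref{thm:offdiagTH}:
\begin{itemize}
\item $t^{[\ell]}_{k-k'}$ is real, and its index is unchanged under $(k,k')\mapsto(-k',-k)$;
\item $e^{\pi i ab\ell(k+k')}$ conjugates to $e^{\pi i ab\ell(-k'-k)}$.
\end{itemize}

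I do not expect a genuine obstacle here. The only point needing care is bookkeeping in the finite case: one must choose $j$ so that $j$, $j-\ell$, $-j$ and $\ell-j$ all lie in $\mathcal{I}_n$. This is possible for $|\ell|\le n-1$, for instance $j=\lceil \ell/2\rceil$, since $\mathcal{I}_n$ is symmetric. Alternatively, one can bypass this by invoking the $j$-independence of the block entries established in Lemma~\ref{lem: diag_blocks}.
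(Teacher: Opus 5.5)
Your proof is correct and complete. The paper states Proposition~\ref{prop:perHermitian} without any proof, so your argument fills a gap rather than reproducing one.

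The only argument implicit in the paper is the one you list as a cross-check, via the factorization of Theorem~\ref{thm:offdiagTH}. In matrix form it reads $G_n^{[\ell]}=D_n^{[\ell]}T_n^{[\ell]}D_n^{[\ell]}$. Because $T_n^{[\ell]}$ is real symmetric Toeplitz, $JT_n^{[\ell]}J=T_n^{[\ell]}$; also $J\,\overline{D_n^{[\ell]}}\,J=D_n^{[\ell]}$. Hence $J(G_n^{[\ell]})^*J=G_n^{[\ell]}$.

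Moreover, $(G_n)_{(j,k),(j',k')}=(G_n^{[j-j']})_{k,k'}$, and the reversed pair $(-j',-k'),(-j,-k)$ has the same modulation difference $j-j'$. So the block identity for all $|\ell|\le n-1$ is \emph{equivalent} to the two-index identity, and that route alone proves the whole proposition.

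Your primary route (realness, then $x\mapsto -x$ and evenness) is more elementary. It also makes clear that nothing about splines or compact support is used: the statement holds for any real, even window.

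You also correctly isolate the one structural input of the finite case, namely the symmetry of $\mathcal I_n$. With the $0\le k,k'\le n-1$ indexing of the paper's notation table, the reversal $k\mapsto n-1-k$ gives the identity only up to the unimodular factor $e^{2\pi i ab\ell(n-1)}$.

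Finally, your bookkeeping concern is unnecessary. If $j,j-\ell\in\mathcal I_n$, which is needed just to define the block, then $-j,\ell-j\in\mathcal I_n$ automatically.
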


\section{Spectral analysis of sub-blocks of $G_n$ and $\mathscr{G}$}\label{sec:asymptotic}

We note that there is a Szeg\"{o} theorem for block Toeplitz matrices \cite{miranda2000asymptotic, gutierrez2008asymptotically, gutierrez2012block} that gives asymptotic results for the infinite block-Toeplitz matrix with blocks of size $k\times k$. Here, the sequence of block-Toeplitz matrices $G_n$ increases in both the number of blocks and the size $k$ of each block.

In this section we use the theory of Toeplitz matrices to describe the spectral properties of the block families
\[
T_n^{[\ell]}\in\mathbb{C}^{n\times n} \quad\text{and}\quad 
T_{\infty}^{[\ell]}\in\mathbb{C}^{\mathbb{Z}\times\mathbb{Z}},
\]
which govern the finite and infinite matrices $G_n^{[\ell]}$ and $\mathscr{G}^{[\ell]}$ through the  similarity relations proved in Section \ref{sec:gram_block}:
\[
G_n^{[\ell]} = D_n^{[\ell]} T_n^{[\ell]} D_n^{[\ell]}, 
\qquad 
\mathscr{G}^{[\ell]} = D_{\infty}^{[\ell]} T_{\infty}^{[\ell]} D_{\infty}^{[\ell]},
\]
where $D_n^{[\ell]}$ and $D_{\infty}^{[\ell]}$ are unitary diagonal matrices with unimodular entries, $(T_n^{[\ell]})_{k,k'}= t^{[\ell]}_{\,k-k'}$ for $k,k'\in \mathcal{I}_n$, and $(T_{\infty}^{[\ell]})_{k,k'}=t^{[\ell]}_{\,k-k'}$ for $k,k'\in \mathbb{Z}$.
Consequently, $G_n^{[\ell]}$ (resp. $\mathscr{G}^{[\ell]}$) and 
$T_n^{[\ell]}$ (resp. $T_{\infty}^{[\ell]}$) are unitarily equivalent and
share the same singular values (and the same spectra when the matrices are Hermitian).
Therefore, the asymptotic spectral behavior of $G_n^{[\ell]}$ (resp. $\mathscr{G}^{[\ell]}$)
is determined by the Toeplitz blocks $T_n^{[\ell]}$ (resp. $T_{\infty}^{[\ell]}$).

\subsection{Classical results}
We summarize classical results on Toeplitz matrices and refer to \cite{gray2006toeplitz, MR36936, zhu2017asymptotic, bottcher2000toeplitz} for comprehensive treatments.

Let \( \mathbf{S}^1 \coloneqq \{ z \in \mathbb{C} : |z| = 1 \} \) denote the complex unit circle. The \emph{Wiener algebra} \( W \) consists of functions \( f : \mathbf{S}^1 \to \mathbb{C} \) whose Fourier series converge absolutely:
\[
f(z) = \sum_{n=-\infty}^{\infty} \hat{f}_n z^n, \quad \|f\|_W = \sum_{n=-\infty}^{\infty} |\hat{f}_n| < \infty.
\]
The Fourier coefficients \( \hat{f}_n \) are given by
\[
\hat{f}_n = \frac{1}{2\pi} \int_0^{2\pi} f(e^{i\theta}) e^{-in\theta} \, d\theta.
\]
To each \( f \in W \) corresponds an infinite Toeplitz matrix \( T(f) \) defined by
\[
T(f) = \begin{bmatrix}
\hat{f}_0 & \hat{f}_{-1} & \hat{f}_{-2} & \cdots \\
\hat{f}_1 & \hat{f}_0 & \hat{f}_{-1} & \cdots \\
\hat{f}_2 & \hat{f}_1 & \hat{f}_0 & \cdots \\
\vdots & \vdots & \vdots & \ddots
\end{bmatrix}.
\]
The function \( f \), called the \emph{symbol} of the Toeplitz operator \( T(f) \), defines a bounded linear operator on \( \ell^p(\mathbb{Z}_+) \) for any \( 1 \leq p \leq \infty \), with operator norm satisfying \( \|T(f)\|_{\ell^p \to \ell^p} \leq \|f\|_W \) \cite{bottcher2005spectral}.


When the Fourier series of \( f \) has only finitely many non-zero coefficients, the associated Toeplitz matrix \( T(f) \) is a symmetric, banded bi-infinite matrix. In this case, \( f \) is called a \emph{Laurent polynomial}, and has the form
\[
f(z) = \sum_{j=-r}^{s} \hat{f}_j z^j, \quad z \in \mathbf{S}^1,
\]
for some integers \( r, s \geq 0 \) and real coefficients \( \hat{f}_j \in \mathbb{R} \).

In this case, \( T(f) \) has no eigenvalues, and its spectrum is the interval \( [\min f, \max f] \) \cite[Theorem~1.4]{bottcher2000toeplitz}.

\begin{theorem}[\cite{MR36936}]\label{thm:Tk_spec}
Let \( f \) be a real Laurent polynomial and let \( T(f) \) be the associated Toeplitz operator on \( \ell^2(\mathbb{Z}) \). Then
\[
\operatorname{sp}(T(f)) = \left[ \min_{z \in \mathbf{S}^1} f(z), \, \max_{z \in \mathbf{S}^1} f(z) \right].
\]
Moreover, if \( f \) is nonconstant, then \( T(f) \) has empty point spectrum (i.e., no eigenvalues).
\end{theorem}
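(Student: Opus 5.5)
The plan is to diagonalize $T(f)$ with the Fourier transform and reduce everything to properties of a multiplication operator. Let $\mathcal{F}:\ell^2(\mathbb{Z})\to L^2(\mathbf{S}^1)$ send $(x_k)$ to $\sum_k x_k z^k$; it is unitary by Parseval. Since the bi-infinite Toeplitz matrix has entries $\hat f_{k-k'}$, convolution becomes multiplication:
\[
\mathcal{F}\,T(f)\,\mathcal{F}^{-1}=M_f,\qquad (M_f\phi)(z)=f(z)\phi(z).
\]
Because $f$ is a Laurent polynomial, the sums involved are finite, so this identity holds for finitely supported sequences. It then extends by density, since both sides are bounded ($\|T(f)\|\le \|f\|_W$ and $\|M_f\|=\|f\|_\infty$). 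Unitary equivalence preserves both the spectrum and the point spectrum, so it suffices to study $M_f$ on $L^2(\mathbf{S}^1)$ with normalized arc-length measure. Throughout I read ``real Laurent polynomial'' as meaning $f$ is real-valued on $\mathbf{S}^1$, i.e.\ $\hat f_{-j}=\overline{\hat f_j}$; in the paper's setting the coefficients are real and symmetric. Then $M_f$ is self-adjoint, and $f$ is continuous.

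For the spectrum, I would show that $\operatorname{sp}(M_f)$ equals the essential range of $f$. If $\lambda\notin f(\mathbf{S}^1)$, then by continuity and compactness $|f-\lambda|\ge\delta>0$, so $M_{(f-\lambda)^{-1}}$ is a bounded inverse. Conversely, if $\lambda=f(z_0)$, take $\phi_\varepsilon$ to be the normalized indicator of a small arc around $z_0$. Continuity gives $\|(M_f-\lambda)\phi_\varepsilon\|\le \sup_{\text{arc}}|f-\lambda|\to0$, so $\lambda$ is an approximate eigenvalue and lies in the spectrum. Since $f$ is real and continuous on the connected set $\mathbf{S}^1$, its range is exactly $[\min f,\max f]$, which proves the first claim.

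For the point spectrum, suppose $M_f\phi=\lambda\phi$ with $\phi\ne0$. Then $(f-\lambda)\phi=0$ almost everywhere, so $\phi$ vanishes a.e.\ off the set $Z_\lambda=\{z\in\mathbf{S}^1: f(z)=\lambda\}$, and $\phi\neq 0$ forces $Z_\lambda$ to have positive measure. The key step is to show $Z_\lambda$ is finite when $f$ is nonconstant. Write $f(z)-\lambda=z^{-r}p(z)$ with $p$ an ordinary polynomial of degree at most $r+s$. If $f$ is nonconstant then $p$ is not identically zero, since otherwise $f\equiv\lambda$. Hence $p$ has at most $r+s$ zeros, $Z_\lambda$ is a finite (null) set, $\phi=0$ a.e., and there are no eigenvalues.

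I expect the only delicate point to be bookkeeping rather than anything deep. The statement is on $\ell^2(\mathbb{Z})$ (bi-infinite), where the Fourier diagonalization is exact; on $\ell^2(\mathbb{Z}_+)$ the same conclusions about the spectrum of a real symbol hold, but they would need the Hartman--Wintner argument instead. I will also note that the extension from finitely supported sequences is justified by boundedness.
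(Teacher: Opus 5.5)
Your argument is correct. The paper gives no proof of its own here: it simply cites Hartman--Wintner (and, in the preceding paragraph, B\"ottcher--Grudsky). So your route is a genuinely different, self-contained one.

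You use the fact that on $\ell^2(\mathbb{Z})$ the bi-infinite Toeplitz (Laurent) operator is unitarily equivalent, via the Fourier series map, to the multiplication operator $M_f$ on $L^2(\mathbf{S}^1)$. The spectrum is then the range of the continuous real function $f$: off the range there is a bounded inverse, and on it there are approximate eigenvectors. Eigenvectors are ruled out because $z^r(f(z)-\lambda)$ is a nonzero polynomial with finitely many zeros.

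The cited classical result is really aimed at the one-sided operator $T(f)$ on $\ell^2(\mathbb{Z}_+)$, which is how the paper actually defines $T(f)$ in the classical-results subsection. There $T(f)$ is only a compression of $M_f$ and is not unitarily equivalent to it. One then needs, for example, a numerical-range plus Fredholm argument for the interval spectrum. For the point spectrum one needs an $H^1$-type argument: if $T(f-\lambda)\varphi=0$, then $(f-\lambda)\abs{\varphi}^2$ is a real-valued function whose nonnegative Fourier coefficients vanish, hence it is zero, so $\varphi=0$ off the zero set of $f-\lambda$.

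The two approaches buy different things. The citation gives generality: one-sided operators and arbitrary real $L^\infty$ symbols. Your diagonalization gives an elementary proof of exactly the version the paper later uses, namely for the bi-infinite blocks $T_{\infty}^{[\ell]}$ on $\ell^2(\mathbb{Z})$.

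Your reading of ``real Laurent polynomial'' as real-valued on $\mathbf{S}^1$ is also the right one. Real coefficients alone would not make $[\min f,\max f]$ meaningful without the symmetry $\hat f_{-j}=\hat f_j$ that the paper tacitly assumes.
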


\begin{theorem}[Lemma 4.1 in \cite{gray2006toeplitz} and Corollary 2.18 in \cite{bottcher2000toeplitz}]\label{cor-bdsfiniteteop}
Let \( T_n \) be the \( n \times n \) Hermitian Toeplitz matrix associated with the real-valued, continuous symbol \( t(x) \) defined on \( \mathbf{S}^1 \). Then the eigenvalues of \( T_n \) lie in the interval
\[
[\inf_{x} t(x), \sup_{x} t(x)],
\]
and satisfy the limits
\[
\lim_{n \to \infty} \lambda_1(T_n) = \inf_{x} t(x), \quad
\lim_{n \to \infty} \lambda_n(T_n) = \sup_{x} t(x).
\]
\end{theorem}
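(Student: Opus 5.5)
The statement to prove is Theorem~\ref{cor-bdsfiniteteop}. The plan is the classical Rayleigh-quotient argument, working directly with the quadratic form of $T_n$ expressed through its symbol. Write $t(x)=\sum_j t_j e^{ijx}$ with $t_j=\overline{t_{-j}}$, which holds because $t$ is real, and $(T_n)_{k,k'}=t_{k-k'}$. For $u\in\C^n$, set $p_u(x)=\sum_{k=0}^{n-1}u_k e^{-ikx}$. Expanding the coefficients $t_{k-k'}=\frac{1}{2\pi}\int_0^{2\pi} t(x)e^{-i(k-k')x}\,dx$ gives
\[
u^*T_nu=\frac{1}{2\pi}\int_0^{2\pi} t(x)\,\abs{p_u(x)}^2\,dx,\qquad \norm{u}^2=\frac{1}{2\pi}\int_0^{2\pi}\abs{p_u(x)}^2\,dx,
\]
where the second identity is Parseval. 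Since $T_n$ is Hermitian, its extreme eigenvalues are the minimum and maximum of the Rayleigh quotient. The pointwise bounds $\inf t\le t(x)\le\sup t$ therefore immediately place every eigenvalue in $[\inf t,\sup t]$.

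For the limits, Cauchy interlacing shows that $T_n$ is a principal submatrix of $T_{n+1}$. Hence $\lambda_1(T_n)$ is nonincreasing and $\lambda_n(T_n)$ is nondecreasing, both are bounded by the previous step, and the limits exist. It remains to show they equal $\inf t$ and $\sup t$. I treat the lower end; the upper end follows by applying the same argument to $-t$.

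Fix $\varepsilon>0$ and pick $x_0$ with $t(x_0)<\inf t+\varepsilon$. By continuity there is $\delta>0$ with $t<\inf t+2\varepsilon$ on $(x_0-\delta,x_0+\delta)$. Take a test vector concentrated near $x_0$, namely the Fejér-kernel vector $u_k=e^{ikx_0}/\sqrt{n}$ for $k=0,\dots,n-1$. Then $\abs{p_u(x)}^2/(2\pi)$ equals the Fejér kernel $F_n(x-x_0)$, which has unit mass, is nonnegative, and has mass outside $(x_0-\delta,x_0+\delta)$ tending to $0$ as $n\to\infty$.

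This yields
\[
\lambda_1(T_n)\le u^*T_nu\le \inf t+2\varepsilon+\norm{t}_\infty\int_{\abs{y}\ge\delta}F_n(y)\,dy,
\]
so $\limsup_n\lambda_1(T_n)\le\inf t+2\varepsilon$. Combined with the lower bound from the first step, this proves the claim.

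The main obstacle is this concentration step: choosing the test vectors and controlling the tail of the Fejér kernel. Everything else is routine. If the Fourier coefficients of $t$ are not summable, the quadratic-form identity must be justified directly from the integral definition of $t_j$ rather than from the series. This is unproblematic because $p_u$ is a trigonometric polynomial.
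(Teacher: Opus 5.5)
Your argument is correct in substance, but there is a sign slip to fix. With your conventions $(T_n)_{k,k'}=t_{k-k'}$ and $t_j=\frac{1}{2\pi}\int_0^{2\pi}t(x)e^{-ijx}\,dx$, expanding the quadratic form gives
\[
u^*T_nu=\frac{1}{2\pi}\int_0^{2\pi}t(x)\,\abs{\sum_{k=0}^{n-1}u_ke^{ikx}}^2\,dx .
\]
So $p_u$ should carry $e^{+ikx}$; the identity you wrote is the one for the reflected symbol $t(-x)$. This does not affect the eigenvalue inclusion. However, with $u_k=e^{ikx_0}/\sqrt{n}$ the true weight is $F_n(x+x_0)$, so $u^*T_nu\to t(-x_0)$. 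That value need not be near $\inf t$ when $t$ is not even: for $t(x)=\sin x$ and $x_0=-\pi/2$ it tends to $\sup t$. Taking $u_k=e^{-ikx_0}/\sqrt{n}$ repairs the step. For the paper's symbols $t^{[\ell]}$, which are even, the issue never arises. (Also, when $\inf t+2\varepsilon<0$ the tail term should carry $\sup t-\inf t$ rather than $\norm{t}_\infty$. This is immaterial, since the tail mass tends to $0$.)

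With that fixed, your route differs from the one behind the citation. The paper gives no proof of its own and imports the result from \cite{gray2006toeplitz} and \cite{bottcher2000toeplitz}. There the inclusion comes from the same Rayleigh-quotient identity. The convergence of the extreme eigenvalues, however, is derived from heavier machinery. Examples are Szeg\H{o}'s distribution theorem tested against a continuous function concentrated near $\inf t$, or norm and spectral properties of the infinite Toeplitz operator $T(t)$ combined with strong convergence of its finite sections.

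Your Fej\'er-kernel vector bypasses all of this. Its Rayleigh quotient is exactly the Fej\'er mean of $t$ at $x_0$, and Fej\'er's theorem at a point of continuity finishes the argument. This is more elementary and needs only continuity of $t$, with no summability of the Fourier coefficients. It also yields a quantitative bound: taking $x_0$ to be a minimizer,
\[
0\le\lambda_1(T_n)-\inf t\le\omega_t(\delta)+\frac{2\norm{t}_\infty}{n\sin^2(\delta/2)}\quad\text{for every }\delta\in(0,\pi],
\]
where $\omega_t$ is the modulus of continuity. What it does not give is the full eigenvalue distribution. The Szeg\H{o} route supplies that, and the paper relies on it later (Theorem~\ref{thm:Tn_lim_eigs} and the circulant comparison).

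Finally, your monotonicity step is unnecessary, because the two-sided squeeze already yields the limits.
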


The following Szeg\"{o}-type limit theorem can be applied.
\begin{theorem}[Theorem 4.1, \cite{gray2006toeplitz}]\label{thm:Tn_lim_eigs}
Let \( T_n \) be a banded Toeplitz matrix with eigenvalues $\lambda_m(T_n)$ and real valued symbol $t$. Then for any function $F(x)$ continuous on $[\inf_xt(x), \sup_xt(x)]$, 

\[\lim_{n\to\infty} \frac{1}{n}\sum_{m=0}^{n-1}F(\lambda_m(T_n))=\frac{1}{2\pi}\int_{0}^{2\pi}F(t(\omega))d\omega. \]

\end{theorem}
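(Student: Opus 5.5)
This is a classical cited result (Theorem 4.1 of Gray), so the plan is to reproduce the standard asymptotic-equivalence argument. I will specialize to the banded real symmetric case that occurs for $T_n^{[\ell]}$.

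\textbf{Setup.} Let $t(\omega)=\sum_{|j|\le r} t_j e^{ij\omega}$ be the real trigonometric polynomial with $t_{-j}=\overline{t_j}$, so that each $T_n$ is Hermitian. By Theorem \ref{cor-bdsfiniteteop}, all eigenvalues $\lambda_m(T_n)$ lie in $[\inf t,\sup t]$. Since $F$ is continuous on this compact interval, the Weierstrass approximation theorem lets me reduce to monomials $F(x)=x^s$, $s\in\mathbb{N}$. Indeed, if $|F-p|<\varepsilon$ uniformly on the interval, then both sides of the identity change by less than $\varepsilon$ uniformly in $n$.

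\textbf{Circulant comparison.} For monomials I need
\[
\frac1n\operatorname{tr}(T_n^s)\to\frac1{2\pi}\int_0^{2\pi}t(\omega)^s\,d\omega .
\]
For $n>2r$, define the circulant $C_n$ with first row built from the $t_j$, wrapped modulo $n$. Its eigenvalues are exactly $t(2\pi m/n)$, $m=0,\dots,n-1$. Hence $\frac1n\operatorname{tr}(C_n^s)=\frac1n\sum_m t(2\pi m/n)^s$, which is a Riemann sum converging to the integral because $t^s$ is continuous.

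\textbf{Asymptotic equivalence.} The difference $E_n=T_n-C_n$ is nonzero only in the two $r\times r$ corner triangles. Therefore its rank is at most $2r$ and its entries are bounded by $\max|t_j|$, so $\|E_n\|_F^2\le 2r^2\max|t_j|^2$ and $\frac1{\sqrt n}\|E_n\|_F\to0$. Both $\|T_n\|$ and $\|C_n\|$ are bounded by $M=\sum|t_j|$. Expanding telescopically,
\[
T_n^s-C_n^s=\sum_{i=0}^{s-1}T_n^{i}E_nC_n^{s-1-i}.
\]
Each term has rank at most $2r$ and operator norm at most $M^{s-1}\|E_n\|$, which is bounded independently of $n$. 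So $|\operatorname{tr}(T_n^s)-\operatorname{tr}(C_n^s)|\le s\cdot 2r\cdot M^{s-1}\|E_n\|=O(1)$, and dividing by $n$ gives $\frac1n|\operatorname{tr}(T_n^s)-\operatorname{tr}(C_n^s)|\to0$. This yields the monomial case, and by the reduction above, the theorem.

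\textbf{Main obstacle.} The only delicate point is the uniform eigenvalue localization, which makes the polynomial approximation uniform in $n$. This is supplied by Theorem \ref{cor-bdsfiniteteop}, or directly by the Rayleigh quotient $x^*T_nx=\frac1{2\pi}\int t|\hat x|^2$. The circulant eigenvalues also lie in the range of $t$, so no extension of $F$ beyond the interval is needed.
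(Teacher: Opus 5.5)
Your argument is correct and is essentially the standard proof in Gray, which the paper cites without reproducing: you compare $T_n$ with the wrapped circulant $C_n$ (Lemma \ref{lem:circeigconv}), obtain the moments of $C_n$ as Riemann sums over its eigenvalues $t(2\pi m/n)$, and extend to continuous $F$ by Weierstrass approximation, made uniform in $n$ by the eigenvalue localization of Theorem \ref{cor-bdsfiniteteop}. The only deviation is minor: you bound $\operatorname{tr}(T_n^s)-\operatorname{tr}(C_n^s)$ using that the corner perturbation has rank at most $2r$, which gives an $O(1/n)$ error, whereas Gray uses the normalized Hilbert--Schmidt norm, which gives $O(n^{-1/2})$.
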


\subsection{Spectral analysis of the infinite Toeplitz block $T_{\infty}^{[\ell]}$}

We fix a modulation difference $\ell$ in this section.

Let $T_{\infty}^{[\ell]}$ be the bi-infinite Toeplitz operator on
$\ell^2(\mathbb{Z})$ with entries $(T_{\infty}^{[\ell]})_{k,k'}=t^{[\ell]}_{\,k-k'}$,
where the coefficients $t^{[\ell]}_{\,j}$ are given by \eqref{eq:tjl}
and vanish for $|j|>\lfloor N/a\rfloor$. According to Corollary~\ref{cor:infBlocksTn}, $T_{\infty}^{[\ell]}$, is symmetric Toeplitz matrix with real entries. Its Laurent symbol is the real
trigonometric polynomial
\[
t^{[\ell]}(x)=\sum_{k=-\lfloor N/a\rfloor}^{\lfloor N/a\rfloor} t^{[\ell]}_{\,k}\,\cos(2\pi k x),
\qquad x\in[-\tfrac{1}{2},\tfrac{1}{2}].
\]
Figure \ref{fig:laurent_poly} shows plots of these Laurent polynomials for varying $a,b$ and $\ell$. In the case where the Toeplitz matrices are generated by an $N$th order B-spline, these Laurent polynomials have a closed analytic form.

\begin{figure} 
    \centering
   
         \includegraphics[width=\textwidth]{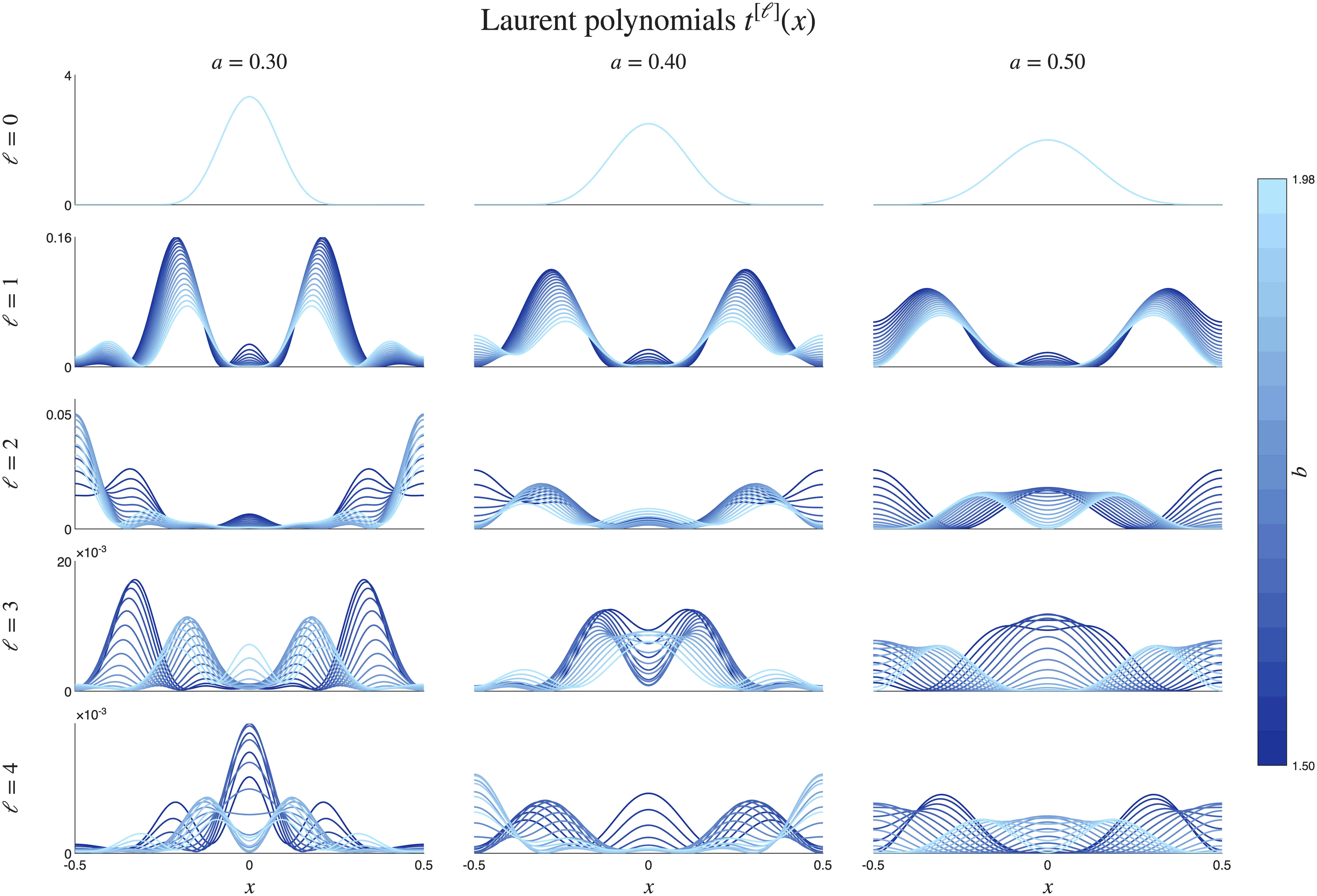}
         \caption{The Laurent polynomials $t^{[\ell]}(x) \in W$ associated to the Toeplitz blocks $T_{\infty}^{[\ell]} \in \mathcal{B}(l^p(\mathbb{Z}))$, $\ell=0, \hdots, 4$. The spectrum of $T_{\infty}^{[\ell]}$ is the interval $[\min t^{[\ell]}(x), \max t^{[\ell]}(x)]$. Here, $a=.3, .4, .5$ and the shades of blue correspond to different values of $b\in [1.5, 1.98]$.}
        \label{fig:laurent_poly}
  
\end{figure}

\begin{theorem} \label{thm:tl_sinc}
Let $N\geq2$. The Laurent polynomials $t^{[\ell]}(x)$ generated by the $N$th order
  B-spline $g=s_N$ have the form
  \begin{equation}
  t^{[\ell]}(x) = \frac{1}{a} \sum_{r \in \mathbb{Z}}
    \sinc^N\!\left(\frac{r - (x - ab\ell/2)}{a}\right)
	\sinc^N\!\left(\frac{r - (x + ab\ell/2)}{a}\right)
	\label{eq:tl_sinc}
  \end{equation}
\end{theorem}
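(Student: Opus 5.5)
The plan is to compute the Toeplitz coefficients $t^{[\ell]}_{\,k}$ of \eqref{eq:tjl} on the Fourier side, and then recognise the symbol as a periodization via the Poisson summation formula. Throughout I use $\widehat{g}(\omega)=\int g(y)e^{-2\pi i y\omega}\,dy$ and $\sinc(\omega)=\sin(\pi\omega)/(\pi\omega)$. With this convention \eqref{eq:sn} and the convolution theorem give $\widehat{s_N}(\omega)=\sinc^N(\omega)$.

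First I remove the cosine. Since $g=s_N$ is real and even, the substitution $y\mapsto -y$ shows that $\int g(y-\tfrac{a}{2}k)g(y+\tfrac{a}{2}k)\sin(2\pi b\ell y)\,dy=0$. Hence
\[
t^{[\ell]}_{\,k}=\int_{\mathbb R} g\left(y-\tfrac{a}{2}k\right)g\left(y+\tfrac{a}{2}k\right)e^{2\pi i b\ell y}\,dy .
\]
I apply Parseval with $f_1(y)=g(y-\tfrac{a}{2}k)e^{2\pi i b\ell y}$ and $f_2(y)=g(y+\tfrac{a}{2}k)$. Their transforms are
\[
\widehat{f_1}(\omega)=\widehat g(\omega-b\ell)e^{-\pi i a k(\omega-b\ell)},\qquad \widehat{f_2}(\omega)=\widehat g(\omega)e^{\pi i a k\omega}.
\]
After the shift $\omega=\eta+b\ell/2$ the phase $e^{\pi i abk\ell}$ cancels, and I expect
\[
t^{[\ell]}_{\,k}=\int_{\mathbb R}F_\ell(\eta)e^{-2\pi i a k\eta}\,d\eta=\widehat{F_\ell}(ak),\qquad F_\ell(\eta):=\sinc^N\!\left(\eta-\tfrac{b\ell}{2}\right)\sinc^N\!\left(\eta+\tfrac{b\ell}{2}\right).
\]
Both $F_\ell$ and $k\mapsto t^{[\ell]}_{\,k}$ are real and even. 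Because the coefficients are even in $k$, the cosine form of the symbol used in this section equals $\sum_k t^{[\ell]}_{\,k}e^{2\pi i k x}$ (in particular the sign convention of the exponential is irrelevant).

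Next I apply Poisson summation. Set $G_\ell(y)=\tfrac1a F_\ell(y/a)$, so that $\widehat{G_\ell}(k)=\widehat{F_\ell}(ak)=t^{[\ell]}_{\,k}$. The $1$-periodic function $P(x)=\sum_{r\in\mathbb Z}G_\ell(x+r)$ has $k$-th Fourier coefficient $\int_{\mathbb R}G_\ell(y)e^{-2\pi i k y}\,dy=t^{[\ell]}_{\,k}$. Hence $P(x)=\sum_k t^{[\ell]}_{\,k}e^{2\pi i kx}=t^{[\ell]}(x)$. Replacing $r$ by $-r$ and using evenness of $F_\ell$ gives $t^{[\ell]}(x)=\tfrac1a\sum_r F_\ell\big((r-x)/a\big)$. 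Finally, $(r-x)/a\mp b\ell/2=\big(r-(x\pm ab\ell/2)\big)/a$, which is exactly \eqref{eq:tl_sinc}.

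The main obstacle is justifying the pointwise Poisson identity rather than merely an $L^2$ identity. For $N\ge 2$ we have $|F_\ell(\eta)|\lesssim (1+|\eta|)^{-2N}$, so the periodization converges absolutely and uniformly, and $P$ is continuous. On the other side, only finitely many $t^{[\ell]}_{\,k}$ are nonzero ($|k|\le\lfloor N/a\rfloor$, by Theorem~\ref{thm:offdiagTH}), so the Fourier series of $P$ is a trigonometric polynomial. Two continuous periodic functions with identical Fourier coefficients coincide everywhere, which closes the argument. Beyond that, the only care needed is bookkeeping of the phases and the half-shifts $\pm b\ell/2$ in the Parseval step.
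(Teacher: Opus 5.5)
Your proof is correct and follows essentially the same route as the paper. Both arguments write $t^{[\ell]}_{\,k}$ as the value at $ak$ of a function whose Fourier transform is a product of two shifted copies of $\sinc^N$, and then apply Poisson summation. The differences are minor: you symmetrize the shift by $b\ell/2$ before periodizing, so the phase $e^{\pi i ab\ell k}$ never has to be absorbed into a shift of $x$. You also justify two steps the paper leaves implicit, namely replacing $\cos(2\pi b\ell x)$ by $e^{2\pi i b\ell x}$ via the evenness of $s_N$, and the pointwise (not just $L^2$) validity of the Poisson identity.
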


  \begin{proof}  

Applying the change of variables \(y = x - \tfrac{a k}{2}\), so \(x = y + \tfrac{a k}{2}\) to \eqref{eq:tjl}, we obtain
\begin{equation*}
t^{[\ell]}_k 
= e^{\pi i a b \ell k}
  \int_{\mathbb{R}} g(y)\, g(y + a k)\, e^{2\pi i b \ell y}\,dy.
\end{equation*}
Define \(f_\ell(y) = g(y)\,e^{2\pi i b \ell y}\).
Then
\begin{equation*}
t^{[\ell]}_k 
= e^{\pi i a b \ell k}\, (f_\ell * g)(a k),
\qquad 
(f_\ell * g)(z) = \int g(y)\, g(y+z)\, e^{2\pi i b \ell y}\,dy.
\end{equation*}

Hence the Laurent polynomial has the form
\begin{equation*}
t^{[\ell]}(x)
= \sum_{|k|\leq \lfloor \frac{N}{a} \rfloor} e^{\pi i a b \ell k}\, (f_\ell * g)(a k)\, e^{2\pi i k x},
\end{equation*}
and applying the Poisson summation formula gives
\begin{equation*}
t^{[\ell]}(x)
= \frac{1}{a}\sum_{r\in\mathbb{Z}}
  \widehat{(f_\ell * g)}\!\left(\frac{r - (x - a b \ell/2)}{a}\right)= \frac{1}{a}\sum_{r\in\mathbb{Z}}
   \widehat{g}\!\left(\tfrac{r - (x - a b \ell/2)}{a}\right)
   \widehat{g}\!\left(\tfrac{r - (x - a b \ell/2)}{a} - b\ell\right).
\end{equation*}
The Fourier transform of $g=s_N$ has the form
\begin{equation*}
\widehat{g}(\xi)
= \sinc^N(\xi)
= \left(\frac{\sin(\pi \xi)}{\pi \xi}\right)^N.
\end{equation*}
Hence the representation of $t^{[\ell]}(x)$ is given by
\eqref{eq:tl_sinc}. 

\end{proof}

%

\begin{corollary} \label{corr: laurent_max_min}
    Suppose that $a = \tfrac{1}{p}$ with $p\in\mathbb{N}$,  $p\ge 2$, and $\ell\ge 0$ and let
$T_{\infty}^{[\ell]}$ denote the Toeplitz operator on $\ell^2(\mathbb{Z})$ with symbol
$t^{[\ell]}(x)$. Then
\begin{equation*}
\Spec\big(T_{\infty}^{[\ell]}\big) \subset [0,p].
\end{equation*}
Moreover, when $\ell=0$ we have 

\begin{equation*}
\Spec\big(T_{\infty}^{[0]}\big)= [0,p].
\end{equation*}

    \end{corollary}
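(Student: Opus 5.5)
The plan is to combine the closed form of Theorem~\ref{thm:tl_sinc} with Theorem~\ref{thm:Tk_spec}. Since $t^{[\ell]}$ is a real Laurent polynomial, $\Spec(T_\infty^{[\ell]})=[\min t^{[\ell]},\max t^{[\ell]}]$. So it suffices to show $0\le t^{[\ell]}(x)\le p$ for all $x$, and, for $\ell=0$, to show that both values $0$ and $p$ are attained. Substituting $a=1/p$ and writing $c=ab\ell/2=b\ell/(2p)$, formula \eqref{eq:tl_sinc} becomes
\[
t^{[\ell]}(x)=p\sum_{r\in\Z}\sinc^N\!\big(p(r-x)+pc\big)\,\sinc^N\!\big(p(r-x)-pc\big).
\]

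For the upper bound I would apply Cauchy--Schwarz in $r$:
\[
|t^{[\ell]}(x)|\le p\Big(\sum_r\sinc^{2N}(p(r-x)+pc)\Big)^{1/2}\Big(\sum_r\sinc^{2N}(p(r-x)-pc)\Big)^{1/2}.
\]
Each factor is at most $1$, for two reasons. First, $|\sinc|\le 1$ gives $\sinc^{2N}\le\sinc^2$. Second, the points $pr+u$ with $r\in\Z$ form a subset of the integer translates $m+u$, $m\in\Z$. Combined with the classical identity $\sum_{m\in\Z}\sinc^2(m+u)=1$, which follows from Poisson summation applied to $\widehat{s_2}=\sinc^2$ together with $\sum_k s_2(k)e^{2\pi i k u}=1$, this yields $\sum_r\sinc^{2N}(pr+u)\le 1$. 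Hence $t^{[\ell]}\le p$.

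For $\ell=0$ we have $c=0$, and the symbol reduces to $t^{[0]}(x)=p\sum_r\sinc^{2N}(p(r-x))\ge 0$. Both endpoints are attained:
\begin{itemize}
\item At $x=0$, every term with $r\neq0$ has argument $pr$, a nonzero integer, so it vanishes. This gives $t^{[0]}(0)=p$.
\item At $x=1/p$, every argument $pr-1$ is a nonzero integer (since $p\ge2$), so $t^{[0]}(1/p)=0$.
\end{itemize}
By continuity $t^{[0]}$ takes every value in $[0,p]$, so $\Spec(T_\infty^{[0]})=[0,p]$.

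The main obstacle is the lower bound $t^{[\ell]}\ge0$ when $\ell\neq0$. For even $N$ it is immediate, because each factor $\sinc^N$ is nonnegative. For odd $N$ the product of the two sinc powers can change sign termwise, so a different argument is needed. The first thing I would try is to pair the terms $r$ and a reflected index (using the evenness of $\sinc$ and the per-Hermitian symmetry of Proposition~\ref{prop:perHermitian}) and show the negative contributions are dominated. Failing that, I would rewrite $t^{[\ell]}(x)$ as a quadratic form, i.e.\ recognise it as $\langle \mathcal{D}F_x,F_x\rangle$ for a positive operator $\mathcal{D}$ acting on a periodized vector $F_x$, which would give nonnegativity directly. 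If neither works, I would expect the lower bound to require the extra hypothesis that $N$ is even, or a restriction on $b\ell$.
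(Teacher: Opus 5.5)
Your treatment of $\ell=0$ and your upper bound are correct, and in fact more complete than the paper's proof. The paper handles only $\ell=0$. It checks that $t^{[0]}=p$ at integers and $t^{[0]}(k/p)=0$, and then asserts $\sup_x t^{[0]}(x)=p$ without ever showing $t^{[0]}\le p$. Your Cauchy--Schwarz step, combined with $\sum_{m\in\Z}\sinc^2(m+u)=1$, supplies exactly this missing inequality. It actually gives $|t^{[\ell]}|\le p$, hence $\Spec(T_\infty^{[\ell]})\subset[-p,p]$, for every $\ell$ and every $N$. For even $N$, termwise nonnegativity then gives the full inclusion in $[0,p]$. The paper's proof says nothing about $\ell\neq 0$.

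The case you leave open, odd $N$ with $\ell\neq0$, cannot be closed by pairing terms or by a positive quadratic form, because the asserted lower bound is false there. For $\ell\neq0$ the symbol periodizes $\widehat g(\xi+b\ell/2)\,\widehat g(\xi-b\ell/2)=\sinc^N(\xi+b\ell/2)\sinc^N(\xi-b\ell/2)$, which changes sign when $N$ is odd. Moreover, the entries $t^{[\ell]}_{k-k'}$ are, up to unimodular factors, cross inner products $\langle g_{j,k},g_{j-\ell,k'}\rangle$ between two different families, so nothing forces positivity.

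Concretely, take $N=3$, $p=2$, $b=3/2$ (so $ab=3/4<1$), $\ell=1$ and $x=1/4$. The two arguments in \eqref{eq:tl_sinc} become $2r+\tfrac14$ and $2r-\tfrac54$, every sine factor has modulus $\tfrac{\sqrt2}{2}$, and
\[
t^{[1]}(\tfrac14)=\frac{1024}{\pi^6}\sum_{r\in\Z}\frac{1}{(8r+1)^3(8r-5)^3}
=\frac{1024}{\pi^6}\Bigl(-\frac{1}{125}+\frac{1}{19683}+\frac{1}{753571}+\cdots\Bigr)\approx-8.5\cdot10^{-3}.
\]
Only the $r=0$ term is negative; all other terms are positive, with total below $6\cdot10^{-5}$. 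Thus $\min t^{[1]}<0$, and by Theorem~\ref{thm:Tk_spec}, $\Spec(T_\infty^{[1]})\not\subset[0,2]$.

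Your closing suspicion is therefore correct: the first assertion of the corollary needs $N$ even (or $\ell=0$). Under that hypothesis your argument is a complete proof. For odd $N$, the correct conclusion is only $\Spec(T_\infty^{[\ell]})\subset[-p,p]$.
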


    \begin{proof}
By Theorem~\ref{thm:tl_sinc} with $\ell=0$ and $a = 1/p$, we have
\begin{equation*}
t^{[0]}(x)
= \frac{1}{a}\sum_{r\in\mathbb{Z}} \sinc^{2N}\!\left(\frac{r-x}{a}\right)
= p \sum_{r\in\mathbb{Z}} \sinc^{2N}\big(p(r-x)\big).
\end{equation*}
At integer points $x\in\mathbb{Z}$, only the term $r=x$ contributes, so $t^{[0]}(x)=p$.

On the other hand, if $x = k/p$ with $k\in\mathbb{Z}\setminus p\mathbb{Z}$, then
$p(r-x) = pr-k$ is a nonzero integer for every $r\in\mathbb{Z}$, and hence
each $\sinc$ factor vanishes. Thus $t^{[0]}(k/p)=0$ for
$k\in\mathbb{Z}\setminus p\mathbb{Z}$.
Since $t^{[0]}$ is continuous and nonnegative, it follows that by Theorem \ref{thm:Tk_spec}
\begin{equation*}
\Spec(T_{\infty}^{[0]})=\left[\inf_x t^{[0]}(x), \sup_x t^{[0]}(x)\right] = [0,p].
\end{equation*}

\end{proof}

The numerical results in Figure \ref{fig:Tijdecay} demonstrate $N$th order decay of the spectral width $|\sup(t^{[\ell]}(x))-\inf(t^{[\ell]}(x))|$ of the finite Gram matrices $G_n$ for Gabor systems generated by the $N$th order B-spline. This gives a notion of block diagonal dominance of the matrix $G_n$.
\begin{figure}[h!]
	\centering
    \includegraphics[width=\linewidth]{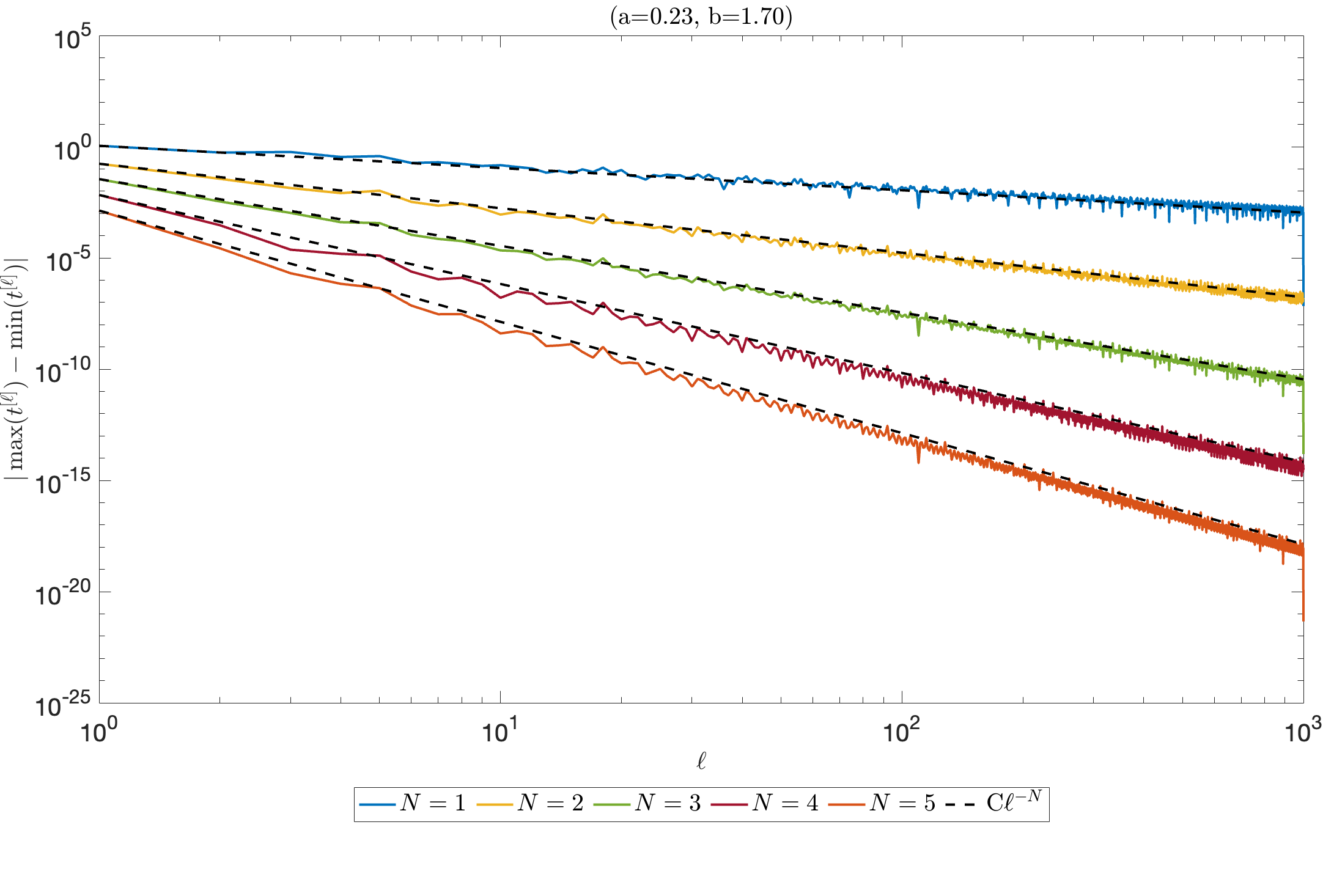}
	\caption{Width of spectrum of $t^{[\ell]}(x)$ for $a=.23$, $b=1.7$ corresponding to the $N$th order B-spline. The dashed lines show the corresponding $N$th order decay proved in Lemma \ref{lem:decay}.}\label{fig:Tijdecay}
\end{figure}

\begin{lemma}\label{lem:decay}
    Let $|\ell|>0$ and let $t^{[\ell]}(x)$ be the Laurent polynomial corresponding to the $N$th-order B-spline. Then, there exists a $C>0$ such that
    \[|\sup(t^{[\ell]}(x))-\inf(t^{[\ell]}(x))|\leq C {\ell}^{-N}.\]
\end{lemma}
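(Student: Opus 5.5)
The plan is to bound $\sup_x |t^{[\ell]}(x)|$ directly from the representation in Theorem~\ref{thm:tl_sinc}. The spectral width satisfies $|\sup t^{[\ell]}-\inf t^{[\ell]}|\le 2\sup_x|t^{[\ell]}(x)|$, so it suffices to show $\sup_x |t^{[\ell]}(x)|\le C'|\ell|^{-N}$. Write $u=x-ab\ell/2$ and $v=x+ab\ell/2$. Then
\[
|t^{[\ell]}(x)|\le \frac1a\sum_{r\in\Z}\Bigl|\sinc\Bigl(\tfrac{r-u}{a}\Bigr)\Bigr|^N\Bigl|\sinc\Bigl(\tfrac{r-v}{a}\Bigr)\Bigr|^N .
\]
The two arguments differ by exactly $\frac{v-u}{a}=b\ell$, independent of $r$ and $x$. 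This rigid separation is the mechanism for the decay: for each $r$, at least one of the two points $\xi_r=(r-u)/a$ and $\xi_r-b\ell$ has modulus at least $b|\ell|/2$.

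First I will use the elementary bound $|\sinc(\xi)|\le \min(1,(\pi|\xi|)^{-1})\le 2(1+|\xi|)^{-1}$, which gives
\[
|t^{[\ell]}(x)|\le \frac{4^N}{a}\sum_{r}(1+|\xi_r|)^{-N}(1+|\xi_r-b\ell|)^{-N},\qquad \xi_r=\tfrac{r-u}{a}.
\]
Next I will use Peetre-type splitting. Since $1+b|\ell|\le(1+|\xi_r|)+(1+|\xi_r-b\ell|)\le 2\max(\cdot,\cdot)$, we have
\[
(1+|\xi_r|)^{-N}(1+|\xi_r-b\ell|)^{-N}\le 2^N(1+b|\ell|)^{-N}\bigl[(1+|\xi_r|)^{-N}+(1+|\xi_r-b\ell|)^{-N}\bigr].
\]
Each of the two remaining sums over $r$ runs over a lattice $\{(r-c)/a\}_{r\in\Z}$ of spacing $1/a$. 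Because $N\ge2>1$, each sum is bounded uniformly in the shift $c$ by a constant depending only on $a$ and $N$. A crude bound suffices: compare with $1+2\sum_{m\ge0}(1+m/a)^{-N}$, which is at most $1+2(1+a/(N-1))$ or similar.

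Combining these estimates gives $\sup_x|t^{[\ell]}(x)|\le C_{a,N}\,(1+b|\ell|)^{-N}\le C_{a,N}\,b^{-N}|\ell|^{-N}$, and hence the lemma with $C=2C_{a,N}b^{-N}$. The constant depends on $a$, $b$ and $N$ but not on $\ell$ or $x$.

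The main point needing care is the uniform-in-shift lattice sum bound together with the splitting inequality. Both are routine, but they must be checked so that no hidden dependence on $x$ or $\ell$ enters. I would also note in passing that the bound holds for all $\ell\neq0$ regardless of sign, since $|\ell|$ enters only through the separation $b|\ell|$. In fact $2N$ powers are available, so the true decay may be faster in some regimes. We only claim $O(|\ell|^{-N})$, which matches Figure~\ref{fig:Tijdecay}.
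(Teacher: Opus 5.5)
Your proof is correct, but it controls the $\sinc$ series differently from the paper. Both arguments bound $\sup_x|t^{[\ell]}(x)|$ using the representation of Theorem~\ref{thm:tl_sinc} and the fixed separation $b\ell$ between the two $\sinc$ arguments.

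The paper assumes $|\ell|>4/(ab)$ and bounds every factor by the pure decay estimate $|\sinc\xi|\le(\pi|\xi|)^{-1}$. It then splits the $r$-sum by hand into four ranges, with $L_0=\lfloor (ab\ell+1)/2\rfloor$:
\begin{itemize}
\item $r=0$, which it shows is even $O(\ell^{-2N})$;
\item $1\le r<L_0$;
\item $r\ge L_0$;
\item $r\le -1$.
\end{itemize}
You replace this case analysis with $|\sinc\xi|\le 2(1+|\xi|)^{-1}$ and the Peetre-type splitting. That pulls out $(1+b|\ell|)^{-N}$ in one step and leaves a lattice sum bounded uniformly in the shift, since $N\ge 2$.

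This buys you more than brevity. The estimate $(\pi|\xi|)^{-1}$ is singular where $r-(x+ab\ell/2)=0$, and this happens for $r=L_0$ at some $x\in[-\tfrac12,\tfrac12]$. At that point the paper's comparison $|r-(x+ab\ell/2)|\ge|r-\tfrac{ab\ell+1}{2}|$ fails. Its resulting constant $\sum_{r\ge0}|r-\theta|^{-N}$, with $\theta$ the fractional part of $(ab\ell+1)/2$, is also not bounded uniformly in $\ell$. Your $\min\bigl(1,(\pi|\xi|)^{-1}\bigr)$ bound handles exactly this term. It yields an explicit constant depending only on $a,b,N$, valid for every $\ell\neq0$, so small $|\ell|$ needs no separate treatment.

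The paper's finer splitting does show which terms dominate (those with $r$ near $\pm ab\ell/2$). That information is not needed for the stated $O(|\ell|^{-N})$ bound.
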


\begin{proof} Suppose that $|\ell|>\tfrac{4}{ab}$ and let $x\in [-1/2, 1/2]$.  From~\eqref{eq:tl_sinc} we have:

\begin{align*}
|t^{[\ell]}(x)|& \leq \tfrac{a^{2N-1}}{\pi^{2N}} \tfrac{1}{|x-ab\ell/2|^N |x+ ab\ell/2|^N} + \tfrac{a^{2N-1}}{\pi^{2N}} \sum_{r=1}^{\infty} \tfrac{1}{|r-(x-ab\ell/2)|^N |r-(x+ab\ell/2)|^N} + \tfrac{a^{2N-1}}{\pi^{2N}} \sum_{r=-\infty}^{-1} \tfrac{1}{|r-(x-ab\ell/2)|^N |r-(x+ab\ell/2)|^N}\\
& = \tfrac{a^{2N-1}}{\pi^{2N}} \tfrac{1}{|x-ab\ell/2|^N |x+ ab\ell/2|^N} + \tfrac{a^{2N-1}}{\pi^{2N}} \sum_{r=1}^{\infty} \tfrac{1}{|r-(x-ab\ell/2)|^N |r-(x+ab\ell/2)|^N} + \tfrac{a^{2N-1}}{\pi^{2N}} \sum_{r=1}^{\infty} \tfrac{1}{|r+(x-ab\ell/2)|^N |r+(x+ab\ell/2)|^N}.
\end{align*}

We assume that $\ell>\tfrac{4}{ab}$ and the case $\ell<-\tfrac{4}{ab}$ is treated similarly.  

It follows that $$\sup_{x\in [-1/2, 1/2]}\tfrac{1}{|x-ab\ell/2|^N |x+ ab\ell/2|^N} \leq 4^{3N}(ab)^{-2N}\  \ell^{-2N}. $$ 

Let $L_0=\lfloor \tfrac{ab\ell +1}{2} \rfloor$. Then,

\begin{align*}
\sum_{r=1}^{\infty} \tfrac{1}{|r-(x-ab\ell/2)|^N |r-(x+ab\ell/2)|^N} &= \sum_{r=1}^{L_0-1} \tfrac{1}{|r-(x-ab\ell/2)|^N |r-(x+ab\ell/2)|^N} + \sum_{r=L_0}^{\infty} \tfrac{1}{|r-(x-ab\ell/2)|^N |r-(x+ab\ell/2)|^N}.
\end{align*}

By writing $\tfrac{ab\ell +1}{2}=L_0+\theta$
for some $\theta \in (0, 1)$ we see that 
\begin{align*}
\sum_{r=L_0}^{\infty} \tfrac{1}{|r-(x-ab\ell/2)|^N |r-(x+ab\ell/2)|^N} &\leq \sum_{r=L_0}^\infty \tfrac{1}{|r+\tfrac{ab\ell-1}{2}|^N} \tfrac{1}{|r-\tfrac{ab\ell +1}{2}|^N}\\
&\leq (L_0 + \tfrac{ab\ell-1}{2})^{-N} \sum_{r=L_0}^\infty \tfrac{1}{|r-\tfrac{ab\ell +1}{2}|^N}\\
&\leq 2^N(ab)^{-N}\ell^{-N} \sum_{r=L_0}^\infty \tfrac{1}{|r-L_0-\theta |^N}\\
&= 2^N(ab)^{-N}\ell^{-N} \sum_{r=0}^\infty \tfrac{1}{|r-\theta |^N}\leq_N \ell^{-N}.
\end{align*}

Next, for each $r\in \{1, 2, \hdots, L_0-1\}$ we have that $$\sup_{x\in [-1/2, 1/2]} \tfrac{1}{|r-(x-ab\ell/2)|^N} \leq \tfrac{2^N}{(1+ab\ell)^N}\leq 2^N(ab)^{-N}\ell^{-N}.$$

Moreover, $$\sup_{x\in [-1/2, 1/2]} \sum_{r=1}^{L_0-1} \tfrac{1}{ |r-(x+ab\ell/2)|^N}= \sup_{x\in [-1/2, 1/2]} \sum_{r=1}^{L_0-1} \tfrac{1}{ |r-x -L_0 -\theta +1/2|^N}= \sup_{x\in [0, 1]} \sum_{r=1}^{L_0-1} \tfrac{1}{ |r -L_0 +x -\theta |^N}<\infty $$ because $r-L_0-\theta \leq -1-\theta<-1$. 

We can estimate $\sum_{r=1}^{\infty} \tfrac{1}{|r+(x-ab\ell/2)|^N |r+(x+ab\ell/2)|^N} $ similarly to conclude that

$$|t^{[\ell]}(x)| \leq a^{2N-1}\pi^{-2N} (4^{3N}(ab)^{-2N}\ell^{-2N} +C_12^{N+1}(ab)^{-N}\ell^{-N}+C_22^{N+1}(ab)^{-N}\ell^{-N})\leq C_N \ell^{-N}$$ for some constants $C_1, C_2$, and $C_N$. 
    
\end{proof}

\subsection{Spectral analysis of the finite Gram matrix $G_n$}

Given Corollary~\ref{cor:infBlocksTn}, the infinite block $\mathscr{G}^{[\ell]}$  within $\mathscr{G}$  has the same singular values as the real-valued, banded Toeplitz matrix $T_{\infty}^{[\ell]}$ whose entries are given by \eqref{eq:tjl}.

Since $G_n^{[0]}$ is a principal sub-matrix of $G_n$, Cauchy's interlacing theorem states that their eigenvalues interlace, leading to bounds  on the minimum and maximum eigenvalues $\lambda_1(G_n)$ and  $\lambda_{n^2}(G_n)$ of ${G}_n$: 
%
    \begin{align*}
        \lambda_k(G_n) \leq \lambda_k(G_n^{[0]}) \leq \lambda_{k+n^2-n}(G_n), \qquad k=1,\hdots n.
    \end{align*}

           In particular,  $\lambda_1(G_n) \leq \lambda_1(G_n^{[0]})$ and $\lambda_n(G_n^{[0]}) \leq  \lambda_{n^2}(G_n)$. Figure \ref{fig:minmax_Gn_eigns} demonstrates this for $n=15$.

Consequently, we have the following result.

\begin{corollary} \label{corr: eigs}
   For $n\geq 1$, there exists an $K \in \mathbb{N}$ such that $\forall n \geq K$, we have the upper and lower estimates:
    \begin{align}
        & \lambda_1(G_n) \leq \inf_x t^{[0]}(x) \leq  \lambda_1(G_n^{[0]}) \\
        & \lambda_n(G_n^{[0]}) \leq \sup_x t^{[0]}(x) \leq \lambda_n(G_n).
    \end{align}
\end{corollary}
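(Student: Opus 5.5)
The two middle inequalities hold for every $n$. The outer ones I would get from limits as $n\to\infty$, and that is where $K$ comes from.

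\emph{Middle inequalities.} For $\ell=0$ the phase vector $v^{[0]}$ is identically $1$. Hence $D_n^{[0]}=I$ and $G_n^{[0]}=T_n^{[0]}$ is exactly the $n\times n$ finite section of the real symmetric banded Toeplitz operator $T_\infty^{[0]}$. Its symbol $t^{[0]}$ is a real trigonometric polynomial, so it is continuous. Theorem~\ref{cor-bdsfiniteteop} then places all eigenvalues of $G_n^{[0]}$ in $[\inf_x t^{[0]}(x),\sup_x t^{[0]}(x)]$. This gives $\inf t^{[0]}\le\lambda_1(G_n^{[0]})$ and $\lambda_n(G_n^{[0]})\le\sup t^{[0]}$ for every $n$. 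The same theorem also gives $\lambda_1(G_n^{[0]})\downarrow\inf t^{[0]}$ and $\lambda_n(G_n^{[0]})\uparrow\sup t^{[0]}$ in the limit.

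\emph{Outer inequalities.} Interlacing (already recorded before the corollary) only yields $\lambda_1(G_n)\le\lambda_1(G_n^{[0]})$ and $\lambda_n(G_n^{[0]})\le\lambda_{n^2}(G_n)$. This is on the wrong side of $\inf t^{[0]}$ and $\sup t^{[0]}$, so I would instead compare with the infinite operator $\mathscr G$.

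Since the $G_n$ are nested principal sections, $\lambda_1(G_n)$ is nonincreasing and $\lambda_{n^2}(G_n)$ is nondecreasing in $n$. They converge to $\inf\operatorname{sp}(\mathscr G)$ and $\|\mathscr G\|$ respectively, by strong convergence $G_n\to\mathscr G$ and the Rayleigh-quotient characterization.

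For the lower bound, Theorem~\ref{thm:tl_sinc} shows $t^{[0]}=\frac1a\sum_r\sinc^{2N}(\cdot)\ge 0$. Because $ab<1$ the Gabor system is redundant, so the synthesis operator $\mathscr T$ has nontrivial kernel and $0\in\operatorname{sp}(\mathscr G)$. Therefore $\lambda_1(G_n)\to 0\le\inf t^{[0]}$.

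For the upper bound, I would build test vectors $u\otimes v\in\C^{n^2}$. Here $u$ is a normalized, slowly varying profile in the modulation index $j$ and $v=D_n^{[0]}w$, with $w$ a normalized Fej\'er-type wave packet concentrated at a maximizer $x_0$ of $t^{[0]}$. The Rayleigh quotient then equals
\[
\sum_{\ell}\Big(\sum_j u_j\overline{u_{j-\ell}}\Big)\, v^*G_n^{[\ell]}v .
\]
Its $\ell=0$ term tends to $t^{[0]}(x_0)$. By Corollary~\ref{cor:infBlocksTn}, each term with $\ell\neq0$ is a Toeplitz quadratic form in the symbol $t^{[\ell]}$, twisted by $D^{[\ell]}$. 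I would aim to show that these terms are nonnegative in the limit, or at least not negative enough to pull the sum below $\sup t^{[0]}$. Lemma~\ref{lem:decay} controls the tail $\sum_{|\ell|\ge L}$ uniformly. This yields $\|\mathscr G\|\ge\sup t^{[0]}$, and monotonicity then gives $K$.

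\emph{Main obstacle.} The limits give $\lim\lambda_1(G_n)\le\inf t^{[0]}$ and $\lim\lambda_{n^2}(G_n)\ge\sup t^{[0]}$. Converting these into inequalities for all $n\ge K$ requires strictness, and equality cases can occur. For instance, when $a=1/p$, Corollary~\ref{corr: laurent_max_min} gives $\inf t^{[0]}=0$. I would first try to prove strict inequalities $\|\mathscr G\|>\sup t^{[0]}$ and $\inf\operatorname{sp}(\mathscr G)<\inf t^{[0]}$ when $\inf t^{[0]}>0$. In the degenerate case $\inf t^{[0]}=0$ I would try to show that $G_n$ is singular for large $n$. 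The idea is that the functions $g_{j,k}$ with $k$ fixed and $|j|$ large are nearly linearly dependent on an interval of length $\sim na$ when $ab<1$. If this fails, the statement must be read asymptotically, as $\lim_n\lambda_1(G_n)\le\inf t^{[0]}$.
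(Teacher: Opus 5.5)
\textbf{Verdict: genuine gap, but it cannot be closed, because the outer inequalities are false in general.}

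Your middle inequalities are the same as the paper's: $v^{[0]}\equiv 1$ gives $G_n^{[0]}=T_n^{[0]}$, and then Theorem~\ref{cor-bdsfiniteteop} applies. Your diagnosis of the outer inequalities is sharper than the paper's proof. The paper derives $\limsup_n\lambda_1(G_n)\le\lim_n\lambda_1(G_n^{[0]})=\inf_x t^{[0]}(x)$ from interlacing. It then simply asserts $\lambda_1(G_n)\le\inf_x t^{[0]}(x)$ for $n\ge K$. That is exactly the unjustified step from a limit inequality to an eventual one that you flagged, and the ``similar'' upper-bound argument has the same defect.

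Your planned repairs cannot succeed. In the degenerate case you cannot make $G_n$ singular, because the finite family $\{g_{j,k}\}_{(j,k)\in\mathcal{J}_n}$ is linearly independent. To see this, look at $(N/2+(k_{\max}-1)a,\,N/2+k_{\max}a)$, where only the translates with the largest active $k$ survive. A nonzero trigonometric polynomial $\sum_j c_{j,k_{\max}}e^{2\pi i jbx}$ would then have to vanish on an interval, which is impossible. The paper's Remark asserts this independence too. Hence $G_n$ is positive definite. For $a=1/p$, Corollary~\ref{corr: laurent_max_min} gives $\inf_x t^{[0]}(x)=0$, so $\lambda_1(G_n)>\inf_x t^{[0]}(x)$ for \emph{every} $n$.

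The strict inequality $\|\mathscr G\|>\sup_x t^{[0]}(x)$ you hoped to prove also fails.
\begin{itemize}
\item Take $a=1/p$ and $b=1$, write $k=pm+q$, and use the Zak transform $Zf(x,\omega)=\sum_m f(x-m)e^{2\pi i m\omega}$.
\item The frame operator becomes multiplication by $\Phi(x,\omega)=\sum_{q=0}^{p-1}|Zs_N(x-q/p,\omega)|^2$.
\item Since $s_N\ge 0$ and $\sum_m s_N(x-m)=1$, you get $\Phi\le p$, with equality only on a null set.
\item Hence $\|\mathscr G\|=p=\sup_x t^{[0]}(x)$, and $p$ is not an eigenvalue of $\mathscr G$. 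Indeed, a top eigenvector $c$ of $G_n$, extended by zero, would satisfy $\mathscr Gc=pc$. Then $\mathscr Tc\neq 0$ would be an eigenvector of the frame operator, which is impossible.
\item So $\lambda_{n^2}(G_n)<\sup_x t^{[0]}(x)$ for every $n$.
\end{itemize}

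No $K$ works in general, and your fallback reading is the right conclusion. What your argument does establish, and what the corollary should state, is the following.
\begin{itemize}
\item The middle inequalities hold for all $n$.
\item $\lambda_1(G_n)\downarrow\inf\operatorname{sp}(\mathscr G)=0\le\inf_x t^{[0]}(x)$. Moreover, $\lambda_1(G_n)<\inf_x t^{[0]}(x)$ for large $n$ exactly when $\inf_x t^{[0]}(x)>0$.
\item $\lambda_{n^2}(G_n)\uparrow\|\mathscr G\|\ge\sup_x t^{[0]}(x)$. The eventual upper inequality holds whenever $\|\mathscr G\|>\sup_x t^{[0]}(x)$, which is not automatic.
\end{itemize}

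Two smaller points.
\begin{itemize}
\item Justify $0\in\operatorname{sp}(\mathscr G)$ by the fact that a Gabor system with $ab<1$ is never a Riesz sequence, so $\mathscr T$ is not bounded below. A nontrivial kernel of $\mathscr T$ is automatic only if the system is a frame.
\item For the weak upper inequality, the slowly varying $u$ and the sign control of the $\ell\neq 0$ terms are unnecessary. Taking $u=\delta_0$, i.e.\ interlacing with the principal block $G_n^{[0]}$, already gives $\|\mathscr G\|\ge\sup_x t^{[0]}(x)$, and nothing stronger holds in general.
\end{itemize}
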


\begin{proof}
We will only establish the first set of inequalities as the second one is proved in a similar manner. 

We note that $\inf_x t^{[0]}(x) \leq  \lambda_1(T_n^{[0]}) $ follows from Corollary~\ref{cor-bdsfiniteteop}. Next, by the interlacing relation, we know that  $\lambda_1(G_n)\leq \lambda_1(G_n^{[0]})$. It follows that $$\limsup_{n}\lambda_1(G_n)\leq \limsup_{n} \lambda_1(G_n^{[0]})= \lim_{n} \lambda_1(G_n^{[0]})=\inf_x t^{[0]}(x). $$ Thus, there exists an integer $K\geq 1$ such that for all $n\geq K$, we have 
$$\lambda_1(G_n)\leq \inf_x t^{[0]}(x).$$

\end{proof}

\begin{figure}
    \centering
    \includegraphics[width=\linewidth]{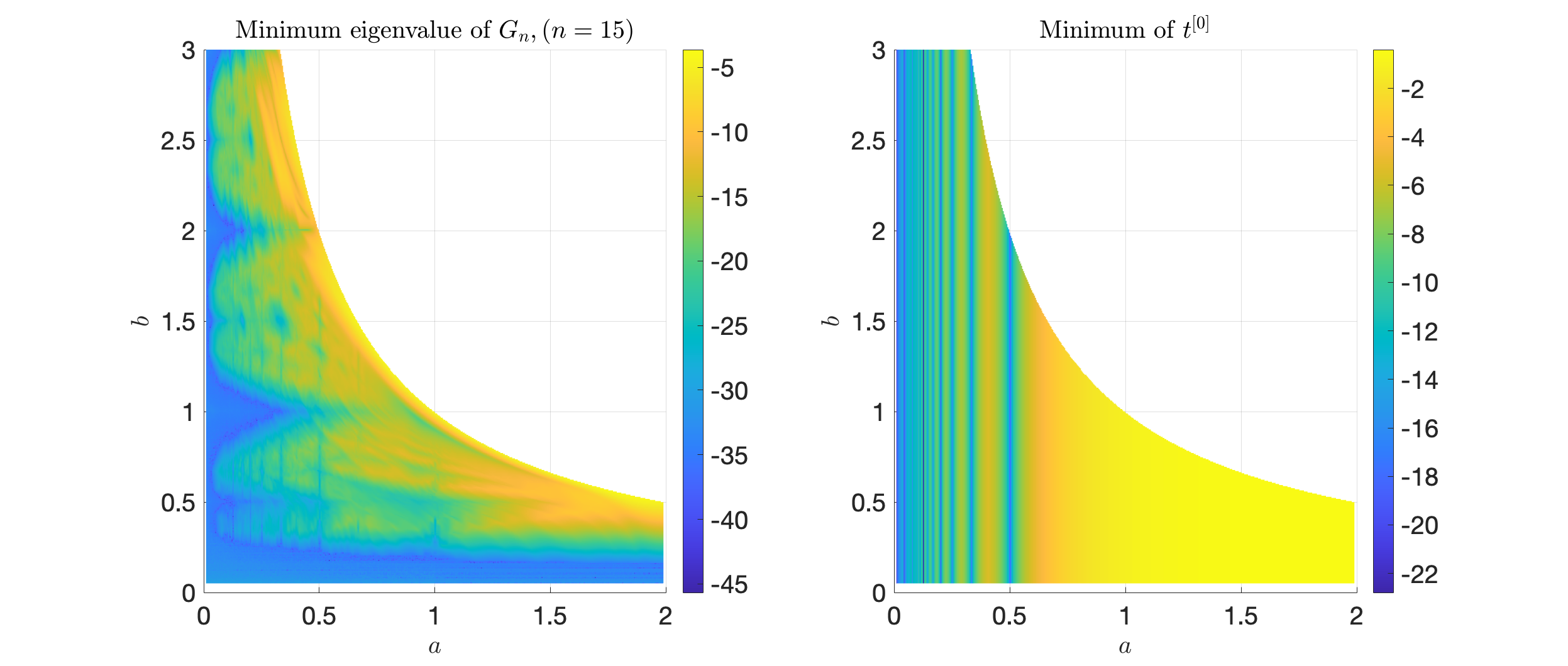}
    \includegraphics[width=\linewidth]{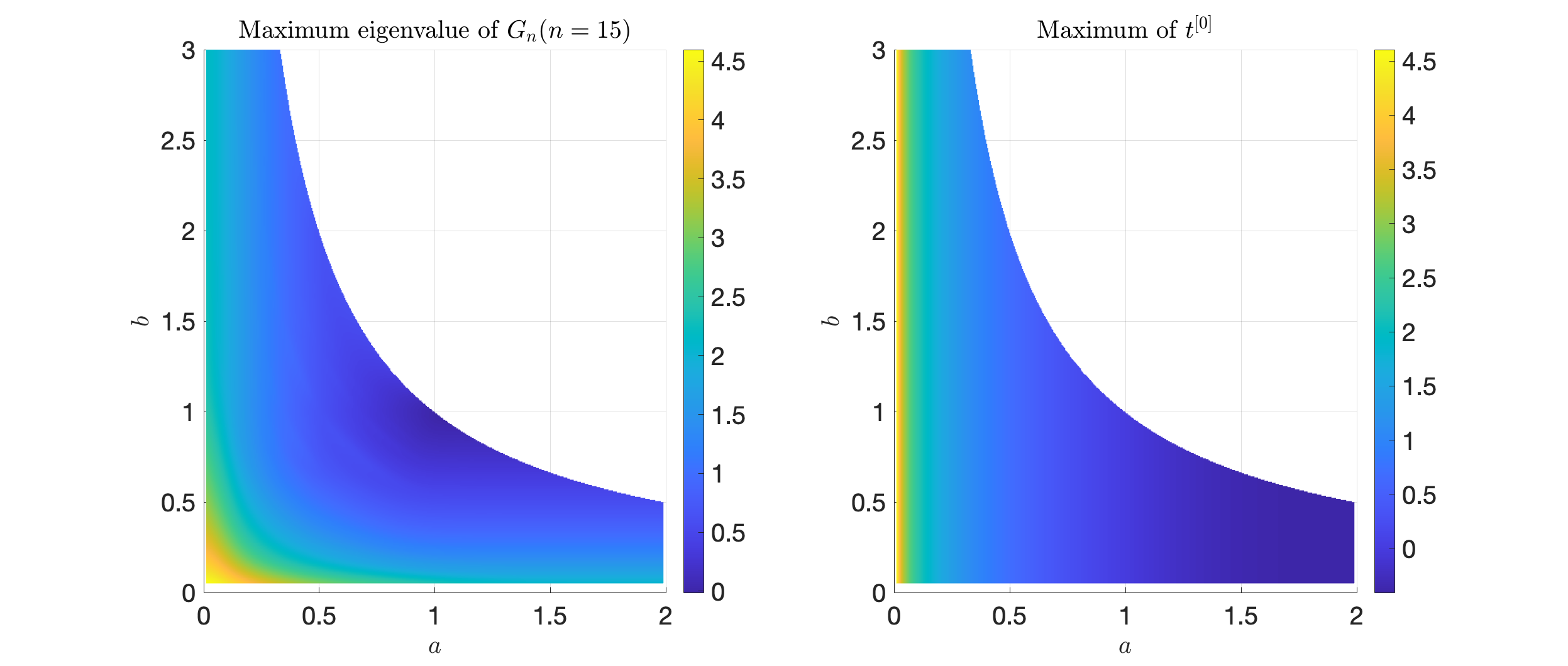}
    \caption{Plots of $\lambda_1(G_n)$ (top left), $\min t^{[0]}(x)$ (top right), $\lambda_n(G_n)$ (bottom left), and $\max t^{[0]}(x)$ (bottom right) for $n=15$ demonstrate the results of Corollary \ref{corr: eigs}.}
    \label{fig:minmax_Gn_eigns}
    
\end{figure}

\subsection{Asymptotically Equivalent Circulant Matrices}\label{sec:circulant}

The exact computation of the spectrum of a Toeplitz matrix \( G_n \), especially for large \( n \), is generally intractable. However, Theorem~\ref{thm:offdiagTH} shows that the spectral behavior of \( G_n \) is governed by the singular values of the Toeplitz blocks \( T_n^{[\ell]} \). To analyze these efficiently, we introduce a family of \emph{circulant matrices} \( C_n^{[\ell]} \) that approximate the Toeplitz matrices \( T_n^{[\ell]} \) in a precise asymptotic sense.

\subsubsection{Background on Asymptotic Equivalence}


Let \( T_n \) be a banded Toeplitz matrix of bandwidth \( m \), defined by a symbol \( t(x) = \sum_{k=-m}^{m} t_k e^{2\pi i k x} \). Define the corresponding circulant matrix \( C_n \in \mathbb{C}^{n \times n} \) by wrapping the Toeplitz coefficients as:
\[
c_k = 
\begin{cases}
t_{-k}, & 0 \leq k \leq m, \\
t_{n-k}, & n-m \leq k \leq n-1, \\
0, & \text{otherwise}.
\end{cases}
\]
This defines the first row of \( C_n \), from which the rest is filled by circular shifts.

\begin{lemma}[Lemma 4.2 in \cite{gray2006toeplitz}]\label{lem:circeigconv} The matrices $T_n$ and $C_n$ are asymptotically equivalent, denoted by $T_n\sim C_n$ i.e.,
\begin{enumerate}
    \item Both sequences are uniformly bounded in operator norm: \( \|T_n\|_2, \|C_n\|_2 \leq M \) for some constant \( M \) independent of \( n \),
    \item The difference vanishes in the normalized Hilbert–Schmidt norm:
    \[
    \lim_{n \to \infty} \left( \frac{1}{n} \sum_{i,j=1}^n |(T_n)_{i,j} - (C_n)_{i,j}|^2 \right)^{1/2} = 0.
    \]
\end{enumerate}
    
\end{lemma}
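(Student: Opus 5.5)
We must prove Lemma \ref{lem:circeigconv}: uniform operator-norm bounds for $T_n$ and $C_n$, and vanishing of the normalized Hilbert--Schmidt norm of $T_n-C_n$. The key fact is that the symbol is a trigonometric polynomial of fixed bandwidth $m$ (for the blocks $T_n^{[\ell]}$ one has $m=\lfloor N/a\rfloor$ by Theorem \ref{thm:offdiagTH}), so all estimates can be made explicit with constants independent of $n$. We assume throughout that $n>2m$, so that the wrapped entries of $C_n$ do not overlap.

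For the uniform bound on $T_n$, I would regard $T_n$ as a compression $P_nT(t)P_n^*$ of the bi-infinite Toeplitz operator. Then $\|T_n\|_2\le \|T(t)\|\le \|t\|_W=\sum_{|k|\le m}|t_k|$, using the Wiener-algebra bound recalled before Theorem \ref{thm:Tk_spec}. Alternatively, Schur's test gives the same bound directly, since every row and column sum of $|T_n|$ is at most $\sum|t_k|$.

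For $C_n$, I would diagonalize with the DFT. Its eigenvalues are $\psi(j/n)=\sum_{k} c_k e^{-2\pi i jk/n}$, which after the wrapping equals $t(j/n)$ up to the sign convention. Hence $\|C_n\|_2=\max_j|t(j/n)|\le \|t\|_\infty\le\|t\|_W$. Schur's test on $C_n$ gives the same constant more crudely, since each row of $C_n$ contains exactly the $2m+1$ coefficients $t_k$. Either route yields $M=\|t\|_W$.

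For the Hilbert--Schmidt estimate, I would compare $T_n$ and $C_n$ entrywise. They agree on the band $|i-j|\le m$. The difference $T_n-C_n$ is supported only in the two corner triangles where $|i-j|\ge n-m$. In those corners each $t_k$ with $1\le|k|\le m$ appears exactly $|k|$ times. Therefore
\[
\sum_{i,j}|(T_n-C_n)_{i,j}|^2=\sum_{k=1}^m k\big(|t_k|^2+|t_{-k}|^2\big)\le m\,\|t\|_2^2,
\]
a constant independent of $n$. Dividing by $n$ and taking the square root gives a quantity of order $O(n^{-1/2})\to0$.

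The main obstacle is purely bookkeeping: matching the paper's indexing convention for $c_k$ (the first row, with $c_k=t_{-k}$ and wrap-around $t_{n-k}$) to the Toeplitz entries, and correctly counting how many times each coefficient appears in the corner triangles. I would check this by writing the $(i,j)$ entry of $C_n$ as $c_{(j-i)\bmod n}$ and treating the cases $j-i\in[0,m]$, $[-m,-1]$, $[n-m,n-1]$ and $[-(n-1),-(n-m)]$ separately.
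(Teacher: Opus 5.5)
Your proposal is correct and follows essentially the same route as the proof of Lemma 4.2 in Gray's monograph, which the paper cites rather than proves. Both bound $T_n$ by the absolute sum of its finitely many coefficients, bound $C_n$ through its DFT eigenvalues $t(j/n)$, and use the exact corner count $\frac{1}{n}\sum_{k=1}^{m} k\bigl(|t_k|^2+|t_{-k}|^2\bigr)\le \frac{m}{n}\sum_{k}|t_k|^2\to 0$. Your index check with $(C_n)_{i,j}=c_{(j-i)\bmod n}$ is the right one; the paper's stated convention $C_{k,k'}=c_{k-k' \bmod n}$ is the transpose of its ``first row'' description, which is harmless here only because $t^{[\ell]}_{-k}=t^{[\ell]}_{k}$.
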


When $T_n\sim C_n$ holds for Hermitian matrices, their eigenvalues are said to be \emph{asymptotically equally distributed}, meaning that for any integer \( s \geq 1 \),
\[
\lim_{n \to \infty} \frac{1}{n} \sum_{m=1}^{n} \left( \lambda^s_m(A_n) - \lambda^s_m(C_n) \right) = 0.
\]
For general theory, see \cite[Chapter 2]{gray2006toeplitz}. In special cases, such as banded Toeplitz matrices, stronger pointwise convergence of eigenvalues is known \cite{zhu2017asymptotic}.

Circulant matrices admit explicit spectral decompositions. Specifically:

\begin{theorem}[Theorem 5.15, \cite{bottcher2000toeplitz}]\label{thm:C_eigs}
Let \( C_n \) be the circulant matrix constructed from a trigonometric polynomial \( t \). Then its eigenvalues are given (up to permutation) by uniformly spaced samples of \( t \):
\[
\operatorname{spec}(C_n) = \left\{ t\left( \tfrac{p}{n} \right) : p = 0, \dots, n-1 \right\}.
\]
\end{theorem}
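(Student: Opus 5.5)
The statement is the classical fact that a circulant matrix is diagonalized by the discrete Fourier transform. I would prove it by exhibiting an explicit eigenbasis. Put $\omega_n := e^{2\pi i/n}$, and for each $p\in\{0,\dots,n-1\}$ define the Fourier vector $f_p\in\C^n$ by $(f_p)_{k'} := \omega_n^{p k'}$, $k'=0,\dots,n-1$. The vectors $f_p/\sqrt{n}$ are the columns of the unitary DFT matrix, so they form an orthonormal basis of $\C^n$. It therefore suffices to show that each $f_p$ is an eigenvector of $C_n$ with eigenvalue $t(p/n)$.

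\textbf{Step 1 (eigenvector computation).} Since $C_n$ is circulant, $(C_n)_{k,k'} = c_{k'-k \bmod n}$. Here I read $c$ as the first row, consistent with the wrapping convention just stated, where the first row carries $t_{-k}$; that is, $(C_n)_{k,k'}=t_{k-k'}$ with the index reduced to $\{-m,\dots,m\}$. Then
\[
(C_n f_p)_k=\sum_{k'=0}^{n-1} c_{k'-k \bmod n}\,\omega_n^{pk'} = \omega_n^{pk}\sum_{q=0}^{n-1} c_q\,\omega_n^{pq},
\]
after substituting $q = k'-k \bmod n$. This substitution is legitimate because $\omega_n^{p\cdot}$ is $n$-periodic, so reducing the exponent mod $n$ does not change the value. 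Hence $f_p$ is an eigenvector with eigenvalue $\mu_p:=\sum_{q=0}^{n-1} c_q\,\omega_n^{pq}$.

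\textbf{Step 2 (identify $\mu_p$ with a sample of the symbol).} Split the sum according to the wrapping rule. For $0\le q\le m$ we have $c_q=t_{-q}$. For $n-m\le q\le n-1$ we have $c_q=t_{n-q}$; writing $q=n-j$ with $1\le j\le m$ gives $\omega_n^{pq}=\omega_n^{-pj}$. All other terms vanish. Therefore
\[
\mu_p=\sum_{j=-m}^{m} t_j\, e^{-2\pi i jp/n},
\]
which is $t$ evaluated at $\pm p/n$, depending on the sign convention for $t(x)=\sum_j t_j e^{2\pi i jx}$.

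\textbf{Step 3 (fix the sign convention).} This is the only real obstacle: the sign of the exponent depends on whether the wrapped coefficients describe the first row or the first column. It is resolved in either of two ways. For the symmetric real symbols used in this paper ($t_j=t_{-j}$), $t$ is even, so $t(-p/n)=t(p/n)$. In general, $\{-p/n \bmod 1 : p=0,\dots,n-1\}$ is again the grid $\{p/n\}$ (via $p\mapsto n-p$), and the claim is only up to permutation. Either way, the multiset of eigenvalues equals $\{t(p/n)\}_{p=0}^{n-1}$.

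One condition is needed: the wrapped ranges $0\le q\le m$ and $n-m\le q\le n-1$ must not overlap, i.e.\ $n>2m$. This holds for the $n$ considered here, since $n>\frac{2}{a}+1$ and $m=\lfloor N/a\rfloor$, taking $n$ large. If they did overlap, the coefficients would simply add and the same computation would still go through. Finally, since the $n$ eigenvectors $f_p$ are linearly independent, these $n$ values account for the full spectrum with multiplicity.
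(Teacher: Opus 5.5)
Your proof is correct. The paper gives no proof of its own and only cites B\"ottcher--Grudsky; diagonalizing $C_n$ by the Fourier vectors $f_p$ is the standard argument behind that citation. Your map $p\mapsto n-p$ correctly handles the row/column sign ambiguity, which the paper itself leaves open: its notation section treats $c$ as the first column, while the circulant subsection treats it as the first row.

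The only slip is cosmetic. The condition $n>\frac{2}{a}+1$ does not by itself give $n>2\lfloor N/a\rfloor$, but your ``taking $n$ large'' and your aliasing remark (adding the coefficients when the wrapped ranges overlap) already cover this.
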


\subsubsection{Application to Gram Matrices for $\mathcal{G}(s_N,\Lambda_n)$}

Let \( T_n^{[\ell]} \) denote the Toeplitz blocks with symbol $t^{\ell}(x)$ generated by the second order B-spline, and define \( C_n^{[\ell]} \) to be the circulant matrix associated with the same symbol. 

Therefore, letting \( \{ \psi_1, \dots, \psi_n \} \) be the ordered eigenvalues of \( C_n^{[\ell]} \), i.e.,
\[
\psi_m = t^{[\ell]}\left( \tfrac{p_m}{n} \right)
\quad \text{for } p_m \in \{0, \dots, n-1\},
\]
we have that for any \( s \geq 1 \),
\[
\lim_{n \to \infty} \frac{1}{n} \sum_{m=1}^{n} \left( \lambda^s_m(T_n^{[\ell]}) - \psi^s_m \right) = 0.
\]
This approximation enables efficient estimation of spectral bounds for the finite Gram matrix \( G_n \). The asymptotic behavior of the difference in eigenvalues between $C_n^{[\ell]}$ and $T_n^{[\ell]}$ is shown in Figure \ref{fig: eig_diff}.

\begin{figure}[h!]
    \centering
    \includegraphics[width=0.9\textwidth]{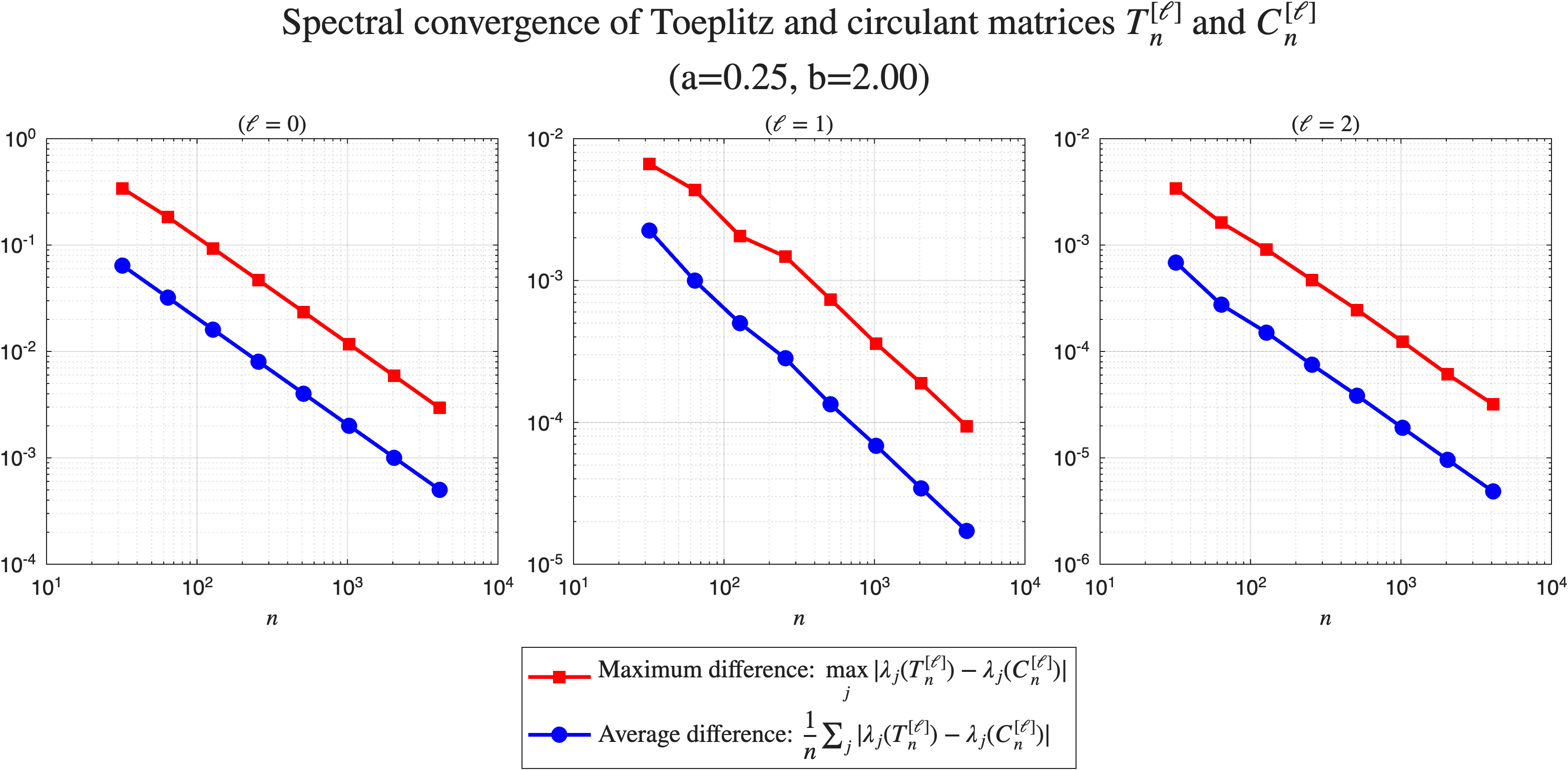}
    \caption{Lemma \ref{lem:circeigconv} gives the convergence of eigenvalues of the Toeplitz block \( T_n^{[\ell]} \) to those of the circulant matrix \( C_n^{[\ell]} \). Here, $T_n^{[\ell]}$ correspond to the subblocks $G_n^{[\ell]}$ of the Gram matrices $G_n$ for $\mathcal{G}(s_2,\Lambda_n)$.}
    \label{fig: eig_diff}
\end{figure}

\section{Conclusions}\label{sec:conclusion}
We conclude by connecting the results of this paper to the frame set problem for the B-spline.

We recall that if $\mathscr{G}$ is the Gram operator for a frame with frame bounds $\alpha, \beta$, then
\[\{0\}\subseteq \sigma(\mathscr{G})\subseteq\{0\}\cup [\alpha, \beta]. \]
Let  $\mathscr{G}^\dagger$ denote its Moore-Penrose pseudoinverse.
Estimates on the operator norm of the pseudo-inverse $\|\mathscr{G}^\dagger\|$ are also valuable as its reciprocal is the lower frame bound $\alpha = \|\mathscr{G}^\dagger\|^{-1}$, see \cite{adcock2019frames}.

Furthermore, the following result can be applied to obtain frame bounds.

\begin{lemma}[Lemmas 4 and 5 in \cite{adcock2019frames}]\label{lem:adcock}
Let  $\Phi$ be a linearly independent frame with frame bounds $\alpha$ and $\beta$.
 \begin{enumerate}
 \item The truncated Gram matrix $G_n$ is invertible with $\|G^{-1}_n\|^{-1}=A_n$ and  $\|G_n\|=B_n$, where $A_n$ and $B_n$ are the frame bounds of the truncated frame $\Phi_n$.
 \item The sequences $\{A_n\}_{n\in \mathbb{Z}}$ and $\{B_n\}_{n\in \mathbb{Z}}$ are monotonically nonincreasing and monotonically nondecreasing, respectively.
 \item $B_n\leq \beta$ for all $n$ and $\lim_{n\to\infty}B_n=\beta$.
 \item $\liminf_{n}A_n>0$ iff $\Phi$ is a Riesz basis.
\end{enumerate}

\end{lemma}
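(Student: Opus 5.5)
This lemma is quoted from \cite{adcock2019frames}, so rather than rebuild it from scratch I would give a self-contained argument. Throughout, the truncated system $\Phi_n=\{g_\lambda\}_{\lambda\in\Lambda_n}$ is regarded as a frame for its finite-dimensional span $H_n$. The basic tools are the synthesis operator $\mathscr{T}_n=\mathscr{T}P_n^*$ and the identity $G_n=\mathscr{T}_n^*\mathscr{T}_n$. For any finite family, the optimal frame bounds of $\Phi_n$ on $H_n$ coincide with the extreme nonzero eigenvalues of $\mathscr{T}_n\mathscr{T}_n^*$ restricted to $H_n$. Those eigenvalues are the nonzero eigenvalues of $G_n$, because $\mathscr{T}\mathscr{T}^*$ and $\mathscr{T}^*\mathscr{T}$ share nonzero spectrum, as recalled in Section~\ref{sec:notations}.

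For (1), linear independence of $\Phi$ makes $\mathscr{T}_n$ injective. Hence $\ip{G_nc,c}=\norm{\mathscr{T}_nc}^2>0$ for every $c\neq0$, so $G_n$ is positive definite and invertible. By the Rayleigh quotient,
\[A_n=\lambda_{\min}(G_n)=\min_{\norm{c}=1}\norm{\mathscr{T}_nc}^2=\norm{G_n^{-1}}^{-1},\qquad B_n=\lambda_{\max}(G_n)=\norm{G_n}.\]

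For (2), I use nestedness: $\Lambda_n\subset\Lambda_{n+2}$, and $G_n$ is the compression $P G_{n+2}P^*$ of $G_{n+2}$ obtained by padding coefficient vectors with zeros. Taking the minimum and maximum of $\ip{G c,c}$ over the larger unit sphere immediately gives $A_{n+2}\le A_n$ and $B_n\le B_{n+2}$. Alternatively, one can cite Cauchy interlacing, exactly as was done for $G_n^{[0]}\subset G_n$ above Corollary~\ref{corr: eigs}.

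For (3), I use the same compression idea with the full operator: $\ip{G_nc,c}=\ip{\mathscr{G}P_n^*c,P_n^*c}\le\norm{\mathscr{G}}\norm{c}^2=\beta\norm{c}^2$, since $\norm{\mathscr{G}}=\norm{\mathscr{S}}=\beta$. For the limit, take a finitely supported $c$ with $\ip{\mathscr{G}c,c}>(\beta-\varepsilon)\norm c^2$; such vectors are dense, and the quadratic form is continuous. For $n$ large the support of $c$ lies in $\mathcal{J}_n$, so $B_n\ge\beta-\varepsilon$. Monotonicity then yields $\lim B_n=\beta$.

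Item (4) is the main point. For the direction $\Leftarrow$: if $\Phi$ is a Riesz basis with lower Riesz bound $r$, then $\norm{\mathscr{T}c}^2\ge r\norm c^2$ for all $c\in\ell^2$. In particular this holds for $c=P_n^*d$, so $A_n\ge r>0$ for every $n$. For the direction $\Rightarrow$: suppose $\liminf A_n=A>0$. Every finitely supported $c$ lies in the range of some $P_n^*$, so $\norm{\mathscr{T}c}^2\ge A\norm{c}^2$. By density and the boundedness of $\mathscr{T}$ (which has norm at most $\sqrt\beta$), this inequality extends to all of $\ell^2$. Thus $\mathscr{T}$ is bounded below, hence injective with closed range. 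Because $\Phi$ is a frame, $\mathscr{T}$ is also surjective, so $\mathscr{T}$ is an isomorphism $\ell^2\to L^2(\R)$ and $\Phi$ is a Riesz basis. The delicate step is keeping track of the fact that, by monotonicity, $\liminf A_n=\inf_n A_n$; this is what lets the lower bound hold uniformly over all finitely supported vectors. Everything else follows from Rayleigh-quotient arguments.
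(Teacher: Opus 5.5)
Your proof is correct. The paper gives no proof of this lemma; it imports it from Adcock--Huybrechs. Your write-up replaces that citation with the standard self-contained argument: truncated Gram matrices as compressions $G_n=P_n\mathscr{G}P_n^*$, combined with Rayleigh quotients. This is presumably also what the cited result rests on.

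What the self-contained route buys is a clear accounting of which hypothesis each item uses.
\begin{itemize}
\item Items (1)--(2) use only linear independence of the finite families and nestedness. Linear independence is what makes $A_n=\lambda_{\min}(G_n)$, rather than the smallest \emph{nonzero} eigenvalue. Without it $A_n$ need not be monotone: for $\phi_1=\phi_2$ a unit vector, $A_1=1$ but $A_2=2$.
\item Item (3) uses only the Bessel property, with $\beta$ read as the optimal bound $\norm{\mathscr{S}}$. You assume this implicitly when you write $\norm{\mathscr{G}}=\beta$; for a non-optimal $\beta$ only $B_n\le\beta$ survives.
\item The frame property enters only in the direction $\liminf_n A_n>0\Rightarrow$ Riesz basis, through surjectivity of $\mathscr{T}$.
\end{itemize}
This separation is exactly what the Remark after the lemma needs. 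There, items 1--3 are applied to $\mathcal{G}(s_N,a\Z\times b\Z)$ without knowing that it is a frame, whereas the lemma as quoted assumes one.

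One correction of emphasis: the step you flag as delicate is not needed. Fix a finitely supported $c$. Then $\norm{\mathscr{T}c}^2\ge A_n\norm{c}^2$ holds for all sufficiently large $n$. Taking $\liminf_n$ gives $\norm{\mathscr{T}c}^2\ge(\liminf_n A_n)\norm{c}^2$ without any monotonicity. Likewise, in (3) the eventual bounds $\beta-\varepsilon\le B_n\le\beta$ already give the limit.
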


\begin{remark}
\begin{enumerate}
\item Although we do not know if $\mathcal{G}(s_N,a\mathbb{Z}\times b\mathbb{Z})$ is a frame, we know that any finite section is linearly independent. This is a special case of the HRT-Conjecture which asserts that any finite collection of time-frequency shifts of a non-zero $L^2$ function is linearly independent \cite{heil1996linear, heil2007history}. Given that the HRT-conjecture is true for all compactly supported $L^2$ functions, we can prove parts 1--3 of Lemma~\ref{lem:adcock} for the Gabor systems generated by $s_N$ and all parameters $0<ab<1.$ Furthermore, we know that by the Balian-Low Theorem, $\mathcal{G}(s_N, a\mathbb{Z}\times b\mathbb{Z})$ is never a Riesz basis. As such we can conclude that $\liminf A_n=0$
\item Our results  provide upper and lower estimates on the frame bounds associated with the finite Gabor frame $\mathcal{G}(s_N,\Lambda_n)$.
\item The Bessel bound of the Gabor
system $\mathcal{G}(s_N,a\mathbb{Z}\times b\mathbb{Z})$ is the supremum of the spectrum of $T_{\infty}^{[0]}$, i.e., $B = \sup_x t^{[0]}(x)$. Suppose that $a = \tfrac{1}{p}$ with $p\in\mathbb{N}$ and $p\ge 2$. Then, Corollary \ref{corr: laurent_max_min} implies that  the Bessel bound of the Gabor system
$\mathcal{G}(g,\Lambda)$ is $\beta = p$. 
\item If $a \leq 1/2$, then the scaled and shifted $\text{sinc}^2(\frac{1}{a}(\cdot-l))$ share zeros with $\text{sinc}^2(\cdot)$ at the scaled integers $\frac{1}{a}\mathbb{Z}$. Hence, whenever $m/n$ coincides with a zero of $t^{[0]}(x)$ (for example, when $a n$ is an integer and $m$ is a multiple of $a n$), we have $\lambda_m(C_n^{[0]}) = 0$. Because the eigenvalues of the finite blocks $T_n^{[0]}$ interlace those of ${G}_n$, it must be the case that if the minimum eigenvalue of $T_n^{[0]}$ tends to zero, so does that of ${G}_n$. The minimum eigenvalue of $T_n^{[0]}$ tends to zero when $C_n^{[0]}$ has an eigenvalue of zero or tends to zero. This corresponds to when $t^{[0]}(x)$ has a zero, see Figure \ref{fig:laurent_poly}. If $a=\frac{1}{4}$ and $n=1024$, then 
\begin{equation} \label{eq: circ_eig}
    \textrm{spec}(C_n^{[0]})  = \left\{4 \sum_{r=-\infty}^{\infty} \text{sinc}^4 (\frac{p}{256}-4r )\right\}_{p=0}^{n-1}
\end{equation}
has zeros at $p=256, 512, 768$. Hence, when $a=\frac{1}{4}$ it must be that the eigenvalue and frame bound $\alpha(a,b,n)=\lambda_1(G_n)\rightarrow 0$ as $n\to \infty$.

\end{enumerate}
 
\end{remark}

It is known that for truncations of linearly independent frames, the lower frame bound must tend to zero \cite{adcock2019frames}. Thus, the following corollary can be viewed as generalizing this result to Gabor systems in general and tying them to the eigenvalues of their Gram matrices.


\begin{corollary} (Lower Frame Bound and Eigenvalue Decay) \label{cor: min_eig_finite_gabor}
    If the minimum eigenvalue $\lambda_1(G_n^{[0]})$ of the of the finite block $G_n^{[0]}\in \mathbb{C}^{n\times n}$ tends to zero as $n \rightarrow\infty$, then (1) $a \in \mathbb{Q}$ and (2) the minimum eigenvalue $\lambda_1(G_n)$ of the finite Gram matrices ${G}_n\in \mathbb{C}^{n^2\times n^2}$ tends to zero as $n \rightarrow \infty$.
\end{corollary}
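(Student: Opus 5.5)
The plan is to prove claim (2) first, since it is almost immediate, and then to obtain claim (1) from the closed form of the symbol $t^{[0]}$.

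\emph{Claim (2).} The block $G_n^{[0]}$ is a principal submatrix of $G_n$: it is the diagonal block for $j=j'$ in the block-Toeplitz form of Lemma~\ref{lem: diag_blocks}. Cauchy interlacing therefore gives $0\le \lambda_1(G_n)\le \lambda_1(G_n^{[0]})$. Here $\lambda_1(G_n)\ge 0$ because $G_n$ is a Gram matrix, hence positive semidefinite. If $\lambda_1(G_n^{[0]})\to 0$, the squeeze gives $\lambda_1(G_n)\to 0$.

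\emph{Claim (1), identifying the limit.} For $\ell=0$ the Hankel factor in Theorem~\ref{thm:offdiagTH} is the all-ones matrix, equivalently $D_n^{[0]}=I$. Hence $G_n^{[0]}=T_n^{[0]}$, a real symmetric banded Toeplitz matrix with continuous real symbol $t^{[0]}$. By Theorem~\ref{cor-bdsfiniteteop}, $\lambda_1(T_n^{[0]})\to\inf_x t^{[0]}(x)$. The hypothesis therefore forces $\inf_x t^{[0]}(x)=0$.

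The symbol $t^{[0]}$ is a trigonometric polynomial, so it is continuous and $1$-periodic. Its infimum is thus attained at some $x_0\in[-\tfrac12,\tfrac12]$, giving $t^{[0]}(x_0)=0$.

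\emph{Claim (1), extracting rationality.} By Theorem~\ref{thm:tl_sinc} with $\ell=0$,
\[
t^{[0]}(x)=\frac1a\sum_{r\in\mathbb{Z}}\sinc^{2N}\!\left(\frac{r-x}{a}\right).
\]
This is a sum of nonnegative terms, so $t^{[0]}(x_0)=0$ forces every term to vanish. Since $\sinc(\xi)=0$ exactly when $\xi\in\mathbb{Z}\setminus\{0\}$, for each $r\in\mathbb{Z}$ there is a nonzero integer $m_r$ with $r-x_0=a\,m_r$. Using $r=0$ and $r=1$ gives $x_0=-a m_0$ and $x_0=1-a m_1$. Subtracting yields $a(m_1-m_0)=1$. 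In particular $m_1\neq m_0$, and $a=1/(m_1-m_0)$, which is rational (indeed the reciprocal of an integer, consistent with Corollary~\ref{corr: laurent_max_min}).

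\emph{Main obstacle.} The delicate step is justifying that the zero of $t^{[0]}$ may be read off termwise from the sinc series. This needs two things: that the series representation of Theorem~\ref{thm:tl_sinc} holds pointwise, which follows from its absolute convergence for $N\ge 2$; and that the infimum is genuinely attained rather than only approached. Both follow from continuity and periodicity of $t^{[0]}$ together with nonnegativity of the summands, so no further estimates should be needed.
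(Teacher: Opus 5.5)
Your proof is correct. Part (2) is exactly the paper's reasoning: Cauchy interlacing for the principal block $G_n^{[0]}$ of $G_n$, as in the discussion before Corollary~\ref{corr: eigs} and in the closing Remark. Your added observation $\lambda_1(G_n)\ge 0$ is what closes the squeeze.

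For part (1) the paper gives no formal proof. Its Remark argues only heuristically, through the circulants $C_n^{[0]}$ (whose eigenvalues are the samples $t^{[0]}(p/n)$) and the shared zeros of scaled $\sinc$ factors when $a\le 1/2$. Your route is different and more direct:
\begin{itemize}
\item you pass from the hypothesis to $\inf_x t^{[0]}(x)=0$ using the extreme-eigenvalue limit of Theorem~\ref{cor-bdsfiniteteop};
\item you use compactness to obtain an attained zero $x_0$;
\item you read off $1/a\in\mathbb{Z}$ termwise from the nonnegative series of Theorem~\ref{thm:tl_sinc}.
\end{itemize}
This bypasses the circulant step entirely. That is an advantage, since the asymptotic equivalence $T_n^{[0]}\sim C_n^{[0]}$ of Lemma~\ref{lem:circeigconv} controls only averaged eigenvalue distributions and cannot by itself locate $\lambda_1(T_n^{[0]})$.

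Your argument also proves more than the stated $a\in\mathbb{Q}$. You get $a=1/p$ with $p\in\mathbb{N}$, and in fact $p\ge 2$, because $x_0\notin\mathbb{Z}$: otherwise the term with $r=x_0$ equals $\sinc^{2N}(0)=1$. Combined with Corollary~\ref{corr: laurent_max_min}, this shows the hypothesis holds exactly when $a=1/p$ with $p\ge 2$. For instance, $a=2/3$ is rational but gives $\inf_x t^{[0]}(x)>0$.

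One small precision in your ``main obstacle'' paragraph: absolute convergence of the periodized series does not by itself justify Poisson summation pointwise. The pointwise identity still holds here, because both sides are continuous $1$-periodic functions with the same Fourier coefficients $(s_N*s_N)(ak)$, so they agree everywhere. Alternatively, simply cite Theorem~\ref{thm:tl_sinc}.
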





In this paper, we have provided a detailed analysis of the structural properties of the Gram matrix $\mathscr{G}$ associated with Gabor systems generated by B-splines of order $N$. By exploiting the compact support of $s_N$, we established a structural hierarchy that allows for a rigorous treatment of the properties of the frame operator's finite sections.

Our primary structural result, Theorem \ref{thm:offdiagTH}, demonstrates that the blocks of the Gram matrix $G_n^{[\ell]}$ admit a Hadamard factorization $G_n^{[\ell]} = T_n^{[\ell]} \circ H_n^{[\ell]}$. This decomposition isolates the modulation-induced phase into a rank-one Hankel matrix $H_n^{[\ell]}$, while the singular values are dictated by a real-valued, symmetric, and banded Toeplitz matrix $T_n^{[\ell]}$. This result connects Gabor analysis and classical Toeplitz theory, enabling the use of Szegő-type limit theorems.


\section{Acknowledgements}
Martin Buck and Kasso Okoudjou were partially supported by grants from the National Science Foundation under grant numbers DMS 2205771, and DMS 2309652. Christina Frederick and Alexander Stangl were supported under NSF grant DMS 2309651.

The authors also acknowledge helpful discussions with Prof. Hans Feichtinger.

The authors used AI tools to edit and polish the manuscript for spelling, grammar, and
general style. The final content and scientific conclusions remain the responsibility of the
authors
\bibliography{bib.bib}
\bibliographystyle{abbrv}

\end{document}